\documentclass{article}

\usepackage{arxiv_preprint,times}

\usepackage{graphicx}
\usepackage{float}
\usepackage{amsmath}
\usepackage{amssymb}
\usepackage{amsthm}
\usepackage{enumitem}
\usepackage{comment}
\usepackage{booktabs}
\usepackage{algorithm}
\usepackage[noend]{algpseudocode}
\usepackage{xcolor}
\usepackage{subcaption}
\usepackage{placeins}
\usepackage{hyperref}
\usepackage{url}
\usepackage{wrapfig}
\newtheorem{proposition}{Proposition}
\newtheorem{corollary}{Corollary}

\algrenewcommand\alglinenumber[1]{}

\title{
Learned Preconditioning for a Primal–Dual Interior-Point Method
}

\author{
Abhinav Madabhushi$^{1}$ \qquad
Jialin Liu$^{2}$ \qquad
Minxin Zhang$^{1}$\thanks{Corresponding author.}\\[0.5em]
{\normalfont $^{1}$Department of Mathematics, University of California, Los Angeles}\\
{\normalfont $^{2}$School of Data, Mathematical, and Statistical Sciences, University of Central Florida}\\[0.5em]
{\normalfont\small
\texttt{abhinavm@g.ucla.edu},
\texttt{jialin.liu@ucf.edu},
\texttt{zhangmx815@gmail.com}}
}

\begin{document}

\maketitle

\begin{abstract}
Interior-point methods (IPMs) are among the most widely used algorithms for constrained optimization, yet their Newton-based search directions require costly second-order information and large linear-system solves. Learning to optimize offers cheaper updates learned from data, but the singular behavior of logarithmic barriers near constraint boundaries makes IPMs highly sensitive to perturbations, complicating both warm starting and learning reliable updates. We introduce pdLIP, an IPM for smooth nonlinear programs that integrates learned preconditioning with pdProj, an all-shifted primal--dual projected-search IPM. A shared coordinate-wise recurrent network predicts a positive diagonal preconditioner that scales the right-hand side of the reduced Newton system for the primal step, and the remaining slack and multiplier directions are recovered analytically. The learned iterations avoid Hessian evaluations and Newton-system solves, using only first-order and coordinate-wise operations amenable to GPU parallelization. Training is self-supervised, with a loss based on a penalty--barrier merit function and the residual of perturbed optimality conditions, requiring neither target directions nor precomputed solutions. Primal and dual shifts mitigate the barrier's sensitivity to perturbations near constraint boundaries, enabling effective warm starting. Across four classes of 200-dimensional convex and nonconvex constrained problems, pdLIP warm starts reduce pdProj refinement iterations by 63--67\% compared with cold starts at the same KKT residual tolerance of $10^{-8}$, with negligible warm-start generation cost relative to the subsequent pdProj solve. Improvements persist on box-constrained QPs with $1000$ variables and extend to applications including portfolio optimization, support vector machines, and a nonlinear control example. These results demonstrate that learning only a structured scaling can substantially reduce the subsequent cost of high-accuracy constrained optimization.
\end{abstract}



\section{Introduction}
\vspace{-1.00ex}
Learning to optimize offers a way to reduce the cost of constrained
optimization by replacing expensive computations in classical algorithms
with learned components. For primal--dual interior-point methods (IPMs),
a major computational cost lies in obtaining Newton directions, which
requires second-order information and linear-system solves.
However, learning to
approximate Newton solves directly does not necessarily eliminate the cost of
constructing the system, including the evaluation of the
Hessian~\citep{gao2024ipm_lstm}. 
Logarithmic barriers pose a separate difficulty: their sensitivity
to perturbations near constraint boundaries complicates learning
reliable updates. These challenges motivate learned updates that
avoid Hessian evaluations and Newton-system solves while retaining
the analytical structure of a primal--dual IPM.

We introduce \emph{pdLIP}, a self-supervised primal--dual IPM for smooth
nonlinear programs. Rather than learning to solve a Newton system, pdLIP
learns only a positive diagonal preconditioner for the primal step.
It uses the all-shifted formulation of
pdProj~\citep{gill2024projected}, whose primal and dual shifts mitigate
barrier sensitivity near constraint boundaries and facilitate warm
starting. The learned iterations generate approximate primal--dual
solutions, which pdProj then refines to high accuracy. 
Our main contributions are:
\begin{itemize}[leftmargin=*,topsep=2pt,itemsep=2pt,parsep=0pt]

\item \textbf{Learned preconditioning.}
We replace solving a reduced KKT system with a learned positive
diagonal scaling while recovering the slack and multiplier directions
analytically. The learned iterations avoid Hessian evaluations and
Newton-system solves, using merely first-order and coordinate-wise operations
suited to GPU parallelization.

\item \textbf{Self-supervised training.}
A shared coordinate-wise recurrent network predicts the scaling, with
a parameter count independent of problem dimension. We train it using
a loss that combines the shifted penalty--barrier merit function with
the residual of perturbed optimality conditions, requiring neither
target directions nor precomputed solutions.

\item \textbf{Reduced refinement cost.}
Across four 200-variable convex and nonconvex benchmark classes, pdLIP
warm starts reduce pdProj refinement iterations by 63--67\% relative
to cold starts at the same KKT residual tolerance of \(10^{-8}\), with
negligible warm-start cost relative to refinement. Further experiments
show benefits on 1000-variable box-constrained QPs, portfolio
optimization, support vector machines, and a nonlinear quadrotor
control problem.
\end{itemize}

\vspace{-1.00ex}

\section{Related Works}
\vspace{-1.00ex}

\textbf{Learning-to-Optimize.} Learning-to-optimize (L2O) uses data from related problem instances to learn solution mappings or components of optimization algorithms \cite{chen2022learning2optimize}. Representative approaches include algorithm unrolling and direct solution prediction \cite{gregor2010learning,donti2021dc3,park2023self}, as well as recurrent learned optimizers \cite{andrychowicz2016learning,wichrowska2017learned}. Learning has also been used to replace specific components of classical algorithms, such as branching \cite{gasse2019exact}, cut selection \cite{tang2020reinforcement}, and pivot selection \cite{liu2024learningpivot}. For iterative optimization, \cite{liu2023towards} derive mathematical structures from fixed-point and convergence requirements to design learned update rules. \cite{luken2026selfsupervised} combine primal--dual prediction with learned iterative refinement using a KKT-based self-supervised loss. pdLIP also follows a structure-informed iterative approach, but learns only coordinate-wise scaling within an analytically specified IPM update, yielding effective primal--dual warm starts.

\vspace{-1.00ex}

\textbf{Learning-Augmented Interior-Point Methods.}
Neural networks have previously been combined with IPMs to generate warm starts \cite{fontova2007warm}. The most closely related modern approach is IPM-LSTM \cite{gao2024ipm_lstm}, which uses an LSTM to approximate the linear-system solve arising in an IPM and then warm-starts IPOPT. In contrast, pdLIP builds on the projected-search IPM of \cite{gill2024projected} and learns a positive coordinate-wise scaling of an IPM-derived direction, avoiding explicit construction of the full KKT Hessian while retaining the remaining primal--dual updates and projection analytically. 
OptNet embeds QP layers solved by a primal--dual IPM into neural networks \cite{amos2017optnet}, while differentiable convex optimization layers extend this approach to a broader class of convex programs \cite{agrawal2019differentiable}. These methods enable end-to-end training by differentiating optimization solution maps with respect to problem parameters, rather than learning the solver's iterative update rule.

\vspace{-0.75ex}
\section{Preliminaries}
\label{sec:preliminaries}
\vspace{-1.00ex}
We formulate the class of nonlinearly constrained optimization problems under consideration and review the projected-search primal–dual 
interior-point method that underlies our learned optimizer.

\vspace{-1.00ex}

\subsection{Problem Formulation}
\label{sec:problem-formulation}
\vspace{-1.00ex}
We consider smooth nonlinear programs of the form
\begin{equation}
\label{eq:general_nlp}
\min_{x\in\mathbb{R}^n} f(x)
\quad \text{s.t.} \quad
\ell^X \le x \le u^X, \quad
\ell^S \le c(x) \le u^S,
\end{equation}
where the objective function $f:\mathbb{R}^n\to\mathbb{R}$ and
the constraint mapping $c:\mathbb{R}^n\to\mathbb{R}^m$ are twice
continuously differentiable. The constant vectors $\ell^X,u^X$
and $\ell^S,u^S$ specify lower and upper bounds on $x$ and $c(x)$,
respectively. Infinite endpoints indicate absent bounds, while
equal finite bounds $\ell_i^S=u_i^S$ represent equality constraints.
We denote the constraint Jacobian by
$J(x)\in\mathbb{R}^{m\times n}$.

Introducing slack variables $s\in\mathbb{R}^m$, we equivalently
express the general constraints as
\begin{equation}
\label{eq:slack_reformulation}
c(x)-s=0,
\qquad
\ell^S\le s\le u^S.
\end{equation}
Define auxiliary variables
$x_1:=x-\ell^X$,
$x_2:=u^X-x,$
$s_1:=s-\ell^S$ and $ s_2:=u^S-s,$
with components associated with infinite bounds omitted. Let \(y\) denote the
vector of multipliers for \(c(x)-s=0\), and let \(z_1,z_2,w_1,w_2\) denote the
multipliers associated with the bounds on \(x\) and \(s\). 
The corresponding KKT conditions for the first-order optimality of \eqref{eq:general_nlp} are given by
\begin{equation}
\label{eq:kkt_conditions_main}
\begin{aligned}
\nabla f(x)-J(x)^\top y-z_1+z_2 &= 0, \\
y-w_1+w_2 &= 0, \\
c(x)-s &= 0, \\
a\ge0,\quad b\ge0,\quad a\odot b &=0,
\qquad (a,b)\in\mathcal{B},
\end{aligned}
\end{equation}
where $\mathcal{B}
:=
\{(x_1,z_1),(x_2,z_2),(s_1,w_1),(s_2,w_2)\},$ and \(\odot\) denotes componentwise multiplication.

\vspace{-1.50ex}

\subsection{pdProj: A Primal--Dual Projected-Search Interior-Point Method}
\label{sec:pdproj_background}
\vspace{-1.00ex}

Our method builds on the all-shifted  primal-dual projected-search
interior-point method pdProj~\citep{gill2024projected}, with detailed equations
derived in \citep{gill_zhang_equations}. We summarize only the
components needed to define the learned update.
\vspace{-1ex}
\paragraph{Perturbed optimality conditions and merit function.}
Let \(\mu^P,\mu^B>0\) be the penalty and barrier parameters, and let
\(\mathcal{E}\) collect an estimate \(y^E\) of \(y\) and estimates
\((a^E,b^E)\) of each pair \((a,b)\in\mathcal{B}\). For fixed
\((\mathcal{E},\mu^P,\mu^B)\), pdProj keeps the stationarity equations of
\eqref{eq:kkt_conditions_main} and replaces feasibility and complementarity
by the perturbed conditions
\begin{equation}
\label{eq:shifted_system}
c(x)-s=\mu^P(y^E-y),
\qquad
a\odot b=\mu^B(a^E-a)+\mu^B(b^E-b)
\quad\text{for all }(a,b)\in\mathcal{B}.
\end{equation}
Let \(v:=(x,s,y,z_1,z_2,w_1,w_2)\), and let the \emph{
path-following function} \(F(v)=F(v;\mathcal{E},\mu^P,\mu^B)\) stack the
stationarity residuals of \eqref{eq:kkt_conditions_main} with the residuals
of \eqref{eq:shifted_system} (Appendix~\ref{app:Formula for F}). 
To globalize the Newton iterations for \(F(v)=0\), pdProj uses an
\emph{all-shifted primal--dual penalty--barrier merit function}
\(M(v)=M(v;\mathcal{E},\mu^P,\mu^B)\),
which combines the objective function with shifted penalty terms for the equality 
constraints and shifted barrier terms for inequality and bound constraints
(Appendix~\ref{app:merit-function}). Let $e$ denote the vector of all $1$'s.
The domain of $M(v)$ is  the \emph{shifted interior}
\(\{(a,b)\in\mathcal B:a+\mu^Be>0,\ b+\mu^Be>0\}\), 
an open set containing the bound-feasible
region and its boundary. Thus, pdProj
can be initialized at a bound-feasible primal point with nonnegative
multipliers, and converges to a a primal--dual
solution of \eqref{eq:kkt_conditions_main} without the need of driving either
\(\mu^P\) or \(\mu^B>0\) to zero. The shifts also mitigates the ill-conditioning of 
the Newton equations. Together, these properties make pdProj particularly well suited both as a
framework for learning reliable updates and as a refinement solver for the
learned warm starts described in the following sections.

\vspace{-1.5ex}

\paragraph{Reduced Newton system.}
\vspace{-1.00ex}
The merit function $M(v)$ is constructed such that its gradient
$\nabla M(v)$ is a nonsingular linear transform of $F(v)$,
and the Newton system for minimizing $M$ approximates that for solving $F(v)=0$ \citep{gill2024projected}.
At \(v_k\) the search
direction solves
$H^M_k\,\Delta v_k=-\nabla M(v_k),$
where \(H^M_k\) is a positive definite approximation of \(\nabla^2M(v_k)\).
Eliminating the slack and bound-multiplier
components of \(\Delta v_k\) gives the reduced KKT system
\begin{equation}
\label{eq:reduced_newton_system}
\begin{pmatrix}
\widetilde H_k & J_k^\top\\
J_k & -D_k
\end{pmatrix}
\begin{pmatrix}
\Delta x_k\\
-\Delta y_k
\end{pmatrix}
=
-\begin{pmatrix}
r_{1,k}\\
r_{2,k}
\end{pmatrix},
\qquad
\widetilde H_k=\widehat H_k+D_{X,k},
\qquad
D_k=\mu^P_kI+D_{B,k},
\end{equation}
where \(J_k=J(x_k)\), $\widehat H_k$ is a positive definite approximation of
the exact Hessian $H(x_k,y_k)$,\(D_{X,k}\) and \(D_{B,k}\) are the positive diagonal
matrices induced by the shifted bounds on \(x\) and \(s\), and
\(r_{1,k}\), \(r_{2,k}\) are first-order residuals
(Appendix~\ref{app:newton-details}). In pdProj, the reduced system is solved via matrix factorization 
\cite[Algorithm~IC, p.~36]{WacB06}. Once \(\Delta x\) and \(\Delta y\) have been computed, 
the remaining components of the update $\Delta v$
are obtained via back substitution (Appendix~\ref{app:remaining-updates}).

\vspace{-1.00ex}

\paragraph{Projected search and global convergence.}
\vspace{-1.00ex}
Rather than truncating the step to remain strictly within the original
bounds as in conventional IPMs, pdProj projects each trial point onto
an enlargement of the bound-feasible region. This allows the search path
to change direction along the boundary of the enlarged region without
recomputing the search direction. Let \(v:=(x,s,y,z_1,z_2,w_1,w_2)\), 
and collect its shifted lower and upper
bounds in vectors $\ell$ and $u$.
With a fixed \emph{fraction-to-the-boundary} parameter \(\sigma\in(0,1)\),
for each iteration $k,$ define
\begin{equation}\label{eq:omega_k}
\Omega_k:=\bigl\{v:\min\{v_k-\sigma(v_k-\ell),\,\bar\ell\}\le v\le \max\{v_k+\sigma (u-v_k),\bar u\}\bigr\},
\end{equation} where $\bar \ell$ and $\bar u$ collect the original lower and upper bounds in 
\eqref{eq:kkt_conditions_main}. The box \(\Omega_k\) contains the
bound-feasible region and lies inside the shifted interior.
The next iterate is then obtained by \begin{equation}\label{eq:pdproj-update-main}
v_{k+1}=\operatorname{proj}_{\Omega_k}\bigl(v_k+\alpha_k\Delta v_k\bigr),
\end{equation}
where $\operatorname{proj}_{\Omega_k}$ denotes orthogonal projection onto $\Omega_k,$ 
and \(\alpha_k>0\) is a step size determined via a flexible 
\emph{quasi-Armijo search} along the
projected path \citep{gill2024projected,ferry2021projected}.

After this update, iteration \(k\) is classified as an O-, M-, or
F-iteration, and \((\mathcal{E}_k,\mu^P_k,\mu^B_k)\) are updated
accordingly. At an \emph{O-iteration}, \(v_{k+1}\) makes sufficient
progress toward satisfying \eqref{eq:kkt_conditions_main}, and the
estimates are updated using the current iterate. At an \emph{M-iteration},
\(v_{k+1}\) satisfies the approximate stationarity tests for \(M\),
and the estimates are updated with safeguards. The penalty parameter
\(\mu^P_k\) is halved if the feasibility test fails, and the barrier
parameter \(\mu^B_k\) is halved if the complementarity tests fail.
At an \emph{F-iteration}, the estimates and penalty and barrier
parameters remain unchanged. Additional safeguards maintain shifted
interiority when the barrier parameter is reduced.
Starting from any bound-feasible $v_0$, 
if \(\{H^M_k\}\) are uniformly
positive definite and bounded, then either infinitely many O-iterations
occur and every limit point of the O-iterates is a KKT point of \eqref{eq:general_nlp} 
under CAKKT regularity \citep{andreani2010new}; or, \eqref{eq:general_nlp} is an infeasible problem, and
every limit point of the M-iterates is an infeasible stationary point
\citep[Theorem~5.2]{gill2024projected}. 

\vspace{-1.00ex}

\section{Method}
\label{sec:method_theory}
\vspace{-1.00ex}

We now describe pdLIP, including its learned diagonal preconditioning, the coordinate-wise recurrent
parameterization, and the self-supervised training.

\vspace{-1.00ex}

\subsection{Motivation for the Learned Update}
\label{sec:learned_ipm_overview}
\vspace{-1.00ex}

Instead of solving the reduced KKT system \eqref{eq:reduced_newton_system}, we
further eliminating \(\Delta y_k\) in the system to derive
\begin{equation}
\label{eq:x_update_main}
\Delta x_k=-G_k^{-1}q_k,
\qquad
G_k:=\widetilde H_k+J_k^\top D_k^{-1}J_k,
\qquad
q_k:=r_{1,k}+J_k^\top D_k^{-1}r_{2,k},
\end{equation}
and $\Delta y_k$ is recovered by 
\begin{equation}
\label{eq:dual_recovery}
\Delta y_k=-D_k^{-1}\bigl(r_{2,k}+J_k\Delta x_k\bigr),
\end{equation}
where $D_k$ is the diagonal matrix given in \eqref{eq:reduced_newton_system}. 
Since \(D_k\) is diagonal, computing \(\Delta y_k\) for a given
\(\Delta x_k\) requires only a Jacobian--vector product followed by
componentwise division. Then
the remaining components of $\Delta v_k$ 
follow by back substitution as in pdProj. The vector \(q_k\) involves only the gradient
\(\nabla f(x_k)\), and matrix multiplications with \(J_k^\top\) and the diagonal matrix \(D_k^{-1}\); 
the operator \(G_k^{-1}\) is the only part that requires
second-order information and a matrix factorization.

This decomposition suggests a simple way to introduce learning into the
primal--dual update. Rather than approximating the entire Newton direction,
pdLIP keeps the reduced right-hand side \(q_k\) and replaces the expensive
application of \(G_k^{-1}\) with a learned positive diagonal scaling \(P_k\):
\begin{equation}
\label{eq:learned_update_overview}
\Delta x_k = -P_k q_k.
\end{equation}
The dual direction \(\Delta y_k\) is then recovered from
\eqref{eq:dual_recovery}, and the remaining slack and multiplier directions
are obtained by the same back-substitution formulas used in pdProj.
In this way, the learned model is used only to replace the costly reduced primal
solve, while the rest of the primal--dual direction is determined
analytically. The resulting learned iterations therefore avoid Hessian
evaluations and KKT matrix factorization without requiring the network to
predict the full primal--dual search direction.

\vspace{-1.00ex}

\subsection{LSTM Inputs}
\label{sec:lstm_inputs}
\vspace{-1.00ex}

At each iteration $k$, we construct the
feature vector for each coordinate \(j\in\{1,\ldots,n\}\) by
\begin{equation}
\label{eq:input_x}
\phi_{k,j}
:=
\Bigl(
x_j,\,
x_{1,j},\,
x_{2,j},\,
z_{1,j},\,
z_{2,j},\  ,
g_{x,j},\,
g_{z_1,j},\,
g_{z_2,j},\,
x^E_{1,j},\,
x^E_{2,j},\,
z^E_{1,j},\,
z^E_{2,j},\,
\log\mu^B,\,
\log\mu^P
\Bigr),
\end{equation}
where all quantities on the right-hand side are evaluated at iteration
\(k\), with the iteration index suppressed for readability. Here,
\(g_x:=\nabla_x M\), \(g_{z_1}:=\nabla_{z_1}M\), and
\(g_{z_2}:=\nabla_{z_2}M\), and
the superscript \(E\) denotes the current estimates as in pdProj.
All state, gradient, and estimate quantities in
\(\phi_{j}\) are evaluated at iteration \(k\), denoted as $\phi_{k,j}$.
The feature vector therefore combines the
primal--dual variables associated with \(x_j\), the
corresponding merit-gradient components and pdProj estimates, and the
current barrier and penalty parameters.

Because \(\mu^B\) and \(\mu^P\) may vary by several orders of magnitude
during the optimization, we use \(\log\mu^B\) and \(\log\mu^P\) as inputs rather
than the raw parameter values. Taking logarithms compresses
their range and keeps the input scales more consistent across iterations. Similar
log-scaled features have been used in prior learning-to-optimize methods
\citep{andrychowicz2016learning,wichrowska2017learned}.

\vspace{-1.00ex}

\paragraph{Coordinate-wise representation.}
\vspace{-1.00ex}
A single LSTM--MLP network, with parameters shared across all coordinates, processes
\(\phi_{k,j}\) for each $j$ and outputs one scaling coefficient for \(x_j\).
We include variables directly associated with
\(x_j\), together with their corresponding first-order information and estimates, 
without concatenating the full vectors \(s\), \(w\), and
\(y\).  Consequently, the number of
trainable parameters is independent of the problem dimension \(n\), and,
for a fixed network architecture, the cost of evaluating all coordinates
scales linearly with \(n\). Although the network processes coordinates
separately, its inputs are not purely local: \(g_{x,j}\) is a component
of the gradient of the full merit function \(M\) and therefore carries
information about the constraints and interactions with other variables.

\vspace{-1.00ex}

\subsection{LSTM Outputs and Search Direction}
\label{sec:lstm_outputs}
\vspace{-1.00ex}

Given \(\phi_{k,j}\), the shared LSTM followed by an MLP produces
\begin{equation}
\label{eq:mlp_lstm_abs}
\hat p_{k,j}
=
\mathrm{MLP}_{\theta_2}
\left(
\mathrm{LSTM}_{\theta_1}
\left(
\phi_{k,j},
h_{k-1,j}
\right)
\right),
\end{equation}
where \(\theta_1\) and \(\theta_2\) denote the trainable parameters of the
LSTM and MLP networks, respectively, and \(h_{k-1,j}\) denotes the recurrent hidden
state associated with coordinate \(j\). The recurrent state allows the
predicted scaling for each coordinate to depend on its optimization history,
rather than solely on its current features. The raw prediction is then
converted to a strictly positive scaling coefficient via
\begin{equation}
\label{eq:positive_lstm_output}
p_{k,j}
:=
\left|
\hat p_{k,j}
\right|
+\delta
\end{equation}
for a fixed $\delta>0.$
Stacking the predictions defines
$P_k
:=
\operatorname{diag}
\left(
p_{k,1},
\ldots,
p_{k,n}
\right)
\succ0,$
and the learned primal direction is
$\Delta x_k
=
-P_kq_k
=
-p_k\odot q_k.$ Once \(\Delta x_k\) is
determined by learned preconditioning, the remaining components of \(\Delta v_k\) are recovered
analytically using the same back-substitution equations as pdProj; the
full equations are included in Appendix~\ref{app:newton-details}.
We show that the resulting full direction
\(\Delta v_k\) is a descent direction for the penalty--barrier merit
function in
Appendix~\ref{app:positive-scaling}.

Finally, the trial point is projected onto the shifted feasible region
\(\Omega_k\) defined in \eqref{eq:omega_k}:
\[
v_{k+1}
=
\operatorname{proj}_{\Omega_k}\bigl(v_k+\Delta v_k\bigr).
\]
Unlike pdProj, which performs a projected search to determine the step
size \(\alpha_k\), pdLIP fixes \(\alpha_k=1\), with the magnitude of the update instead 
controlled by the learned diagonal scaling \(P_k\).
\vspace{-0.50ex}

\paragraph{Choice of output transformation.}
\vspace{-0.50ex}
We use the absolute-value transformation in
Eq.~\eqref{eq:positive_lstm_output} because it makes the scaling
coefficient depend on the magnitude rather than the sign of the raw
prediction, while ensuring a strictly positive lower bound
\(\delta>0\) without imposing an upper bound. The resulting
condition \(P_k\succ 0\) makes the effective reduced matrix \(P_k^{-1}\)
positive definite, consistent with the modified-Newton principle of
enforcing positive definiteness to obtain a descent direction
\citep{gill2024projected}. Unlike a conventional modified-Newton method,
pdLIP enforces this condition directly through the learned
parameterization, without forming or modifying a Hessian. An unscaled
sigmoid would also ensure positivity, but would restrict the coefficients
to a bounded interval. Softplus is positive and unbounded above, but maps
large negative predictions toward zero rather than preserving their
magnitude. In preliminary experiments, the absolute-value transformation
produced more reliable updates than these alternatives. This choice is
empirical: the descent property of the update requires the predicted coefficients to be
positive, but does not depend on the particular transformation used to
enforce positivity.

\vspace{-1.00ex}

\subsection{Self-supervised training}
\label{sec:self_supervised_loss}
\vspace{-1.00ex}
We train pdLIP in a self-supervised manner, without target Newton
directions or precomputed optimal solutions. At iteration \(k\), the
network predicts the positive diagonal scaling used to compute
\(\Delta x_k\). The remaining components of the primal--dual direction
are recovered analytically, and the training loss is evaluated at the
resulting iterate \(v_{k+1}\).

Let \(\mathcal{D}\) denote the current training batch and let \(\Theta=(\theta_1,\theta_2)\)
collect all trainable parameters of the LSTM--MLP network. For each problem instance
\(\xi\in\mathcal{D}\), define
\(
\zeta_k^{(\xi)}(\Theta)
:=
\left(
v_{k+1}^{(\xi)}(\Theta);
\mathcal{E}_k^{(\xi)},
\mu_k^{P,(\xi)},
\mu_k^{B,(\xi)}
\right).
\)
The batch loss is defined by
\begin{equation}
\label{eq:loss_function}
\mathcal{L}_k(\Theta)
=
\frac{1}{|\mathcal{D}|}
\sum_{\xi\in\mathcal{D}}
\left[
M\!\left(\zeta_k^{(\xi)}(\Theta)\right)
+
\log\!\left(
\left\|
F\!\left(\zeta_k^{(\xi)}(\Theta)\right)
\right\|_F^2
+
\epsilon
\right)
\right],
\end{equation}
where \(\epsilon>0\) is a small constant, \(M\) is the penalty-barrier merit function,
and \(F\) is the path-following residual introduced in
Section~\ref{sec:pdproj_background}, and $\|\cdot\|_F$ denotes the Frobenius norm.

\paragraph{Design of the training objective.}
The merit and residual terms serve complementary purposes in the training objective \eqref{eq:loss_function}. 
The merit term encourages each learned update to decrease the shifted penalty--barrier objective, while
the residual term acts as an auxiliary optimality penalty that guides the
new iterate toward the shifted primal--dual optimality system. The combined
objective therefore favors updates that reduce the merit function while
moving the iterate closer to satisfying the associated perturbed KKT
conditions.

The logarithm compresses large residual values so that the residual term
does not dominate the merit term, while \(\epsilon>0\) keeps the logarithm
finite as $F$ approaches $0$. Using the
optimization objective itself as a training signal is standard in
learning-to-optimize methods
\citep{andrychowicz2016learning,wichrowska2017learned,liu2023towards}.
The residual term serves as an auxiliary optimality regularizer, motivated
by self-supervised approaches based on first-order optimality conditions
\citep{luken2026selfsupervised}.

\paragraph{One-step unrolling.}
We use an unroll length of one. After computing \(v_{k+1}\), we evaluate
\(\mathcal{L}_k\), backpropagate through the learned update, and update
\(\Theta\). The iteration is then classified as an \(O\)-, \(M\)-, or
\(F\)-iteration using the original pdProj criteria, and the primal--dual
estimates and barrier and penalty parameters are updated accordingly
\citep{gill2024projected}. These updated quantities, together with
\(\nabla M(v_{k+1})\), are used to construct the input features for 
the next pdLIP iteration. Figure~\ref{fig:learned_projected_search_ipm}
summarizes one pdLIP iteration.

Longer unrolls allow errors in the learned scaling to accumulate through
the coupled primal--dual state, analytical back-substitution, and repeated
projections. They also increase memory requirements and susceptibility to
vanishing or exploding gradients. Moreover, because the \(O/M/F\) updates
may change the primal--dual estimates and barrier and penalty parameters,
successive learned steps may correspond to different shifted systems. Our
preliminary experiments indicate that unroll lengths greater than one were substantially
less stable. Further details are provided in
Appendix~\ref{app:unrolling_details}.

\begin{figure}[thp]
    \centering
\includegraphics[
  width=0.9\linewidth,
  height=0.2\textheight
]{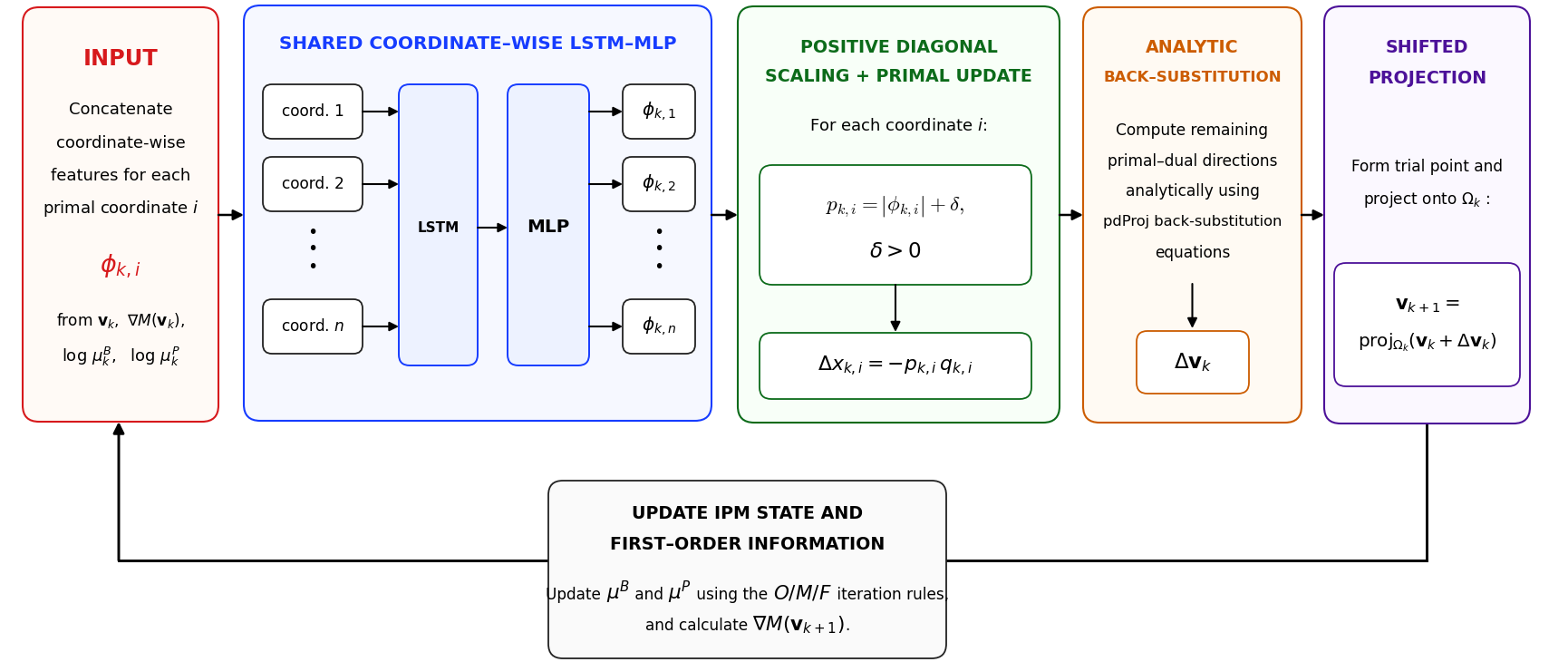}
   \caption{\textbf{Overview of one pdLIP iteration.}
At iteration \(k\), the shared LSTM--MLP maps each
\(\phi_{k,j}\) to a raw scaling \(\widehat p_{k,j}\).
The transformation \(p_{k,j}=|\widehat p_{k,j}|+\delta\) yields a
positive diagonal scaling \(P_k\succ0\), which defines the primal
direction \(\Delta x_k=-P_kq_k\). The remaining components of
\(\Delta v_k\) are recovered analytically from the pdProj equations,
after which the full trial step is projected onto \(\Omega_k\).
The estimates and barrier
and penalty parameters are then updated based on pdProj \(O/M/F\) rules.}
    \label{fig:learned_projected_search_ipm}
\end{figure}

\vspace{-1.00ex}


\section{Experiments}
\vspace{-1.00ex}
\subsection{Experimental Setup}
\label{sec:experimental_setup}
\vspace{-1.00ex}
For each problem class, we generate separate training, validation, and test
sets containing \(8{,}000\), \(1{,}000\), and \(1{,}000\) instances,
respectively, drawn from the same underlying distribution. The
coordinate-wise model consists of a single-layer LSTM with hidden dimension
\(128\), followed by an MLP head with architecture
\(128\rightarrow128\rightarrow1\): a linear layer of width \(128\), a ReLU
activation, and a scalar-output linear layer. The same LSTM--MLP parameters
are shared across all coordinates of \(x\).
Unless otherwise stated, we train the network using Adam with learning rate
\(10^{-4}\), batch size \(512\), \(50\) epochs, and \(300\) pdLIP iterations
per problem instance. Consistent with the one-step unrolling described in
Section~\ref{sec:self_supervised_loss}, the network parameters are updated
by backpropagation after each learned iteration. We set
\(\delta=10^{-8}\) in Eq.~\eqref{eq:positive_lstm_output} and
\(\epsilon=10^{-8}\) in Eq.~\eqref{eq:loss_function}. These architectural
and training settings are held fixed across problem classes unless otherwise
noted.

Because performance is more sensitive to the initial barrier and penalty
parameters than to the other hyperparameters considered, we tune
\((\mu_0^B,\mu_0^P)\) separately for each class of the problems based on 
validation performance. The selected values are then fixed for
evaluation on the held-out test set. Detailed results from the sensitivity
analysis are reported in Appendix~\ref{app:hyperparameter_tuning}.

We evaluate pdLIP as a primal--dual warm-start generator for
pdProj~\citep{gill2024projected}. pdProj is a natural refinement solver
because it accepts a full primal--dual initialization and shares the shifted
projected-search structure used by pdLIP. Among the iterates
\(v_0,\ldots,v_K\) generated by pdLIP, we select the warm start according to
\begin{equation}
\label{eq:warm_start_selection}
k^\star
=
\operatorname*{arg\,min}_{0\leq k\leq K}\chi_k,
\qquad
\chi_k
=
\chi_P(v_k)
+
\chi_D(v_k)
+
\chi_C(v_k,\mu_k^B),
\end{equation}
where $\chi_k$ is the same measure used in pdProj to assess progress toward optimality.
Its three components are
defined in Appendix~\ref{app:chi_measure}. The selected state
\(v_{k^\star}\) is used to initialize pdProj, while the barrier and penalty
parameters are initialized at their pdProj default values rather than
inherited from pdLIP. This avoids numerical-scale mismatches between the
learned and refinement phases.
We declare convergence for all solvers when the KKT residual satisfies
\(r_{\mathrm{KKT}}\leq10^{-8}\), with detailed equations of \(r_{\mathrm{KKT}}\) given in
Appendix~\ref{app:kkt_residual}. Both cold-started and warm-started pdProj converged on all reported test
instances. For comparison, we also report IPOPT iteration counts under the
same stopping criterion.

The learned warm start (inference) is generated on an NVIDIA RTX PRO 6000 Blackwell
Server Edition GPU using Google Colab, while pdProj is run on CPU.
All the training tasks are conducted on a workstation equipped with eight NVIDIA Quadro 
RTX 6000 GPUs (24 GiB of GPU memory each), two Intel Xeon Gold 5218 CPUs, and 502 GiB of system RAM.
In all the tables below, WS Cost denotes the time required to generate the
pdLIP warm start, and Total Time includes both WS Cost and the subsequent
pdProj refinement. All iteration counts and runtimes are averaged over the
test set. Reported reductions compare cold-started pdProj with the complete
pdLIP--pdProj pipeline. We use iteration reduction as the primary measure
of warm-start effectiveness because it more directly reflects the quality
of the initialization, whereas wall-clock time also depends on implementation
details and hardware. We do not directly compare pdProj and IPOPT runtimes because IPOPT
uses a faster linear solver than the current pdProj implementation,
giving it a per-iteration speed advantage that does not come from
the underlying IPM algorithm. 
Appendix~\ref{app:direct-approx} instead compares pdLIP directly with
IPOPT, demonstrating that pdLIP generates approximate solutions at lower
cost using GPU-friendly updates.

\vspace{-1.00ex}

\subsection{Warm-Start Performance}
\label{sec:experiments}
\vspace{-1.00ex}

\subsubsection{Convex and nonconvex benchmark problems}
\vspace{-1.00ex}
We first evaluate pdLIP on the constrained benchmarks used by
IPM-LSTM~\citep{gao2024ipm_lstm}. Each instance has \(200\) variables,
\(100\) equality constraints, and \(100\) inequality constraints. C-RHS
and C-ALL denote the convex QP settings, while NC-RHS and NC-ALL denote
their nonconvex counterparts. In the RHS setting, only the right-hand
sides of the equality constraints vary across instances; in the ALL
setting, all problem parameters are perturbed. Following
\citet{donti2021dc3}, the nonconvex variant replaces the linear term
\(p_0^\top x\) with \(p_0^\top\sin(x)\), giving the objective
\(
\frac{1}{2}x^\top Q_0x+p_0^\top\sin(x),
\)
where \(\sin(x)\) is applied componentwise.
IPM-LSTM uses IPOPT for refinement, whereas pdLIP uses pdProj. We therefore
evaluate warm-start effectiveness by the percentage reduction in refinement
iterations relative to each solver's own cold-start baseline. 
We therefore assess warm-start effectiveness by the percentage reduction in refinement iterations relative 
to each solver’s cold-start baseline, thereby focusing the comparison on the benefit of the learned 
initialization rather than differences between the refinement solvers.
Results are reported in Table~\ref{table:benchmark}.
Across the four benchmark classes, pdLIP warm starts reduce the number of
pdProj refinement iterations by \(63.38\%\)--\(67.10\%\) and total runtime
by \(62.81\%\)--\(68.29\%\) relative to cold-started pdProj. On every
shared benchmark class, the iteration reduction exceeds that reported for
IPM-LSTM by at least \(23\) percentage points.
\vspace{-1.00ex}

\begin{table}[H]
\centering
\caption{Warm-start results on the constrained benchmark problems from
\citep{gao2024ipm_lstm}.}\label{table:benchmark}
\label{tab:constrained_qp_full_comparison_200dim}

\begingroup
\small
\setlength{\tabcolsep}{3pt}
\begin{tabular*}{\textwidth}{@{\extracolsep{\fill}}lccccccc@{}}
\toprule
\textbf{Class}
& \textbf{IPOPT}
& \textbf{Cold pdProj}
& \multicolumn{3}{c}{\textbf{pdLIP + pdProj}}
& \multicolumn{2}{c}{\textbf{Reduction (Iter. / Time)}} \\
\cmidrule(lr){4-6}
\cmidrule(lr){7-8}

& \textbf{Iter.}
& \textbf{Iter. / Time}
& \textbf{WS Cost}
& \textbf{Iter. / Time}
& \textbf{Total Time}
& \textbf{pdLIP}
& \textbf{IPM-LSTM} \\
\midrule

C-RHS
& \(15.63\)
& \(13.41 / 7.53\) s
& \(20.11\) ms
& \(4.64 / 2.58\) s
& \(2.60\) s
& \(\mathbf{65.40 / 65.48}\%\)
& \(42.4 / 15.1\%\) \\

C-ALL
& \(15.82\)
& \(13.22 / 7.41\) s
& \(19.93\) ms
& \(4.35 / 2.42\) s
& \(2.44\) s
& \(\mathbf{67.10 / 67.11}\%\)
& \(35.7 / 7.5\%\) \\

NC-RHS
& \(15.60\)
& \(13.47 / 7.53\) s
& \(20.03\) ms
& \(4.93 / 2.78\) s
& \(2.80\) s
& \(\mathbf{63.38 / 62.81}\%\)
& \(27.5 / 1.3\%\) \\

NC-ALL
& \(15.72\)
& \(13.46 / 8.85\) s
& \(20.20\) ms
& \(4.84 / 2.79\) s
& \(2.81\) s
& \(\mathbf{64.04 / 68.29}\%\)
& \(15.4 / -8.6\%\) \\

\bottomrule
\end{tabular*}
\endgroup
\end{table}

\vspace{-1.00ex}

\paragraph{Scaling with Problem Dimension.}
\vspace{-1.00ex}
To assess scaling with problem dimension, we train and evaluate pdLIP on
convex box-constrained QPs with \(n\in\{10,200,1000\}\). Because the
\(n=10\) problems are substantially easier, we use \(100\) pdLIP iterations
per instance during training; all other settings remain unchanged.
As shown in Table~\ref{tab:box_qp_scaling}, pdLIP provides effective warm
starts for all three problem dimensions. At \(n=10\) and \(n=200\), it
reduces the number of pdProj refinement iterations by \(79.55\%\) and
\(80.81\%\), and total runtime by \(58.21\%\) and \(82.17\%\),
respectively. At \(n=1000\), the iteration reduction is smaller, at
\(47.90\%\), while total runtime is still reduced by \(66.73\%\).
Warm-start generation remains inexpensive at this scale, requiring only
\(40.94\) ms per instance, less than \(1\%\) of the \(4.53\) s required
for the subsequent pdProj refinement.
\vspace{-1.00ex}

\begin{table}[H]
\centering
\caption{Warm-start results on convex box-constrained QPs across dimensions.}
\label{tab:box_qp_scaling}

\begingroup
\small
\setlength{\tabcolsep}{4pt}
\begin{tabular*}{\textwidth}{@{\extracolsep{\fill}}lcccccc@{}}
\toprule
\textbf{Dim.}
& \textbf{IPOPT}
& \textbf{Cold pdProj}
& \multicolumn{3}{c}{\textbf{pdLIP + pdProj}}
& \textbf{Reduction} \\
\cmidrule(lr){4-6}

& \textbf{Iter.}
& \textbf{Iter. / Time}
& \textbf{WS Cost}
& \textbf{Iter. / Time}
& \textbf{Total Time}
& \textbf{Iter. / Time} \\
\midrule

\(n=10\)
& \(8.99\)
& \(7.45 / 6.66\) ms
& \(0.98\) ms
& \(1.52 / 1.80\) ms
& \(2.78\) ms
& \(\mathbf{79.55 / 58.21}\%\) \\

\(n=200\)
& \(13.48\)
& \(11.44 / 6.62\) s
& \(7.32\) ms
& \(2.19 / 1.17\) s
& \(1.18\) s
& \(\mathbf{80.81 / 82.17}\%\) \\

\(n=1000\)
& \(15.50\)
& \(13.87 / 13.74\) s
& \(40.94\) ms
& \(7.23 / 4.53\) s
& \(4.57\) s
& \(\mathbf{47.90 / 66.73}\%\) \\

\bottomrule
\end{tabular*}
\endgroup
\end{table}

\vspace{-1.00ex}

\subsubsection{Applications}

\paragraph{Portfolio Optimization and SVM Problems.}
\vspace{-1.00ex}
We next evaluate pdLIP on application-motivated portfolio optimization and
support vector machine (SVM) problems.We follow the instance-generation procedures of
\citet{chen2024expressive}, but consider a dense setting that removes the
exploitable sparsity present in the original benchmarks. The portfolio instances use randomly sampled
returns and a factor-model covariance matrix, while the SVM instances are
formulated as dense convex QPs. For the portfolio problems, \((s,t)\)
denotes the numbers of assets and factors; for the SVM problems, it denotes
the numbers of features and samples. Complete formulations and
instance-generation details are provided in
Appendices~\ref{app:portfolio_formulation} and
\ref{app:svm_formulation}.
As reported in Table~\ref{tab:pdproj_warm_start_portfolio_svm},
across the portfolio optimizationand SVM settings, pdLIP warm starts reduce the number
of pdProj refinement iterations by \(22.40\%\)--\(61.28\%\) and total
runtime by \(20.71\%\)--\(61.65\%\). The gains are more modest on the larger portfolio setting, 
but pdLIP still
reduces pdProj refinement iterations by \(22.40\%\) and total runtime by
\(20.71\%\).
\vspace{-1.00ex}

\vspace{-1.00ex}

\begin{table}[H]
\centering
\caption{Warm-start results on portfolio optimization and SVM problems.}
\label{tab:pdproj_warm_start_portfolio_svm}
\begingroup
\small
\setlength{\tabcolsep}{3pt}
\begin{tabular*}{\textwidth}{@{\extracolsep{\fill}}lccccccc@{}}
\toprule
\textbf{Problem}
& \textbf{\((s,t)\)}
& \textbf{IPOPT}
& \textbf{Cold pdProj}
& \multicolumn{3}{c}{\textbf{pdLIP + pdProj}}
& \textbf{Reduction} \\
\cmidrule(lr){5-7}

&
& \textbf{Iter.}
& \textbf{Iter. / Time}
& \textbf{WS Cost}
& \textbf{Iter. / Time}
& \textbf{Total Time}
& \textbf{Iter. / Time} \\
\midrule

Portfolio
& \((50,5)\)
& \(14.68\)
& \(8.99 / 22.94\) ms
& \(5.16\) ms
& \(3.48 / 7.82\) ms
& \(12.98\) ms
& \(\mathbf{61.28 / 43.43}\%\) \\

Portfolio
& \((200,20)\)
& \(21.06\)
& \(12.07 / 6.50\) s
& \(14.07\) ms
& \(9.36 / 5.14\) s
& \(5.15\) s
& \(\mathbf{22.40 / 20.71}\%\) \\

SVM
& \((5,50)\)
& \(12.52\)
& \(9.30 / 3.45\) s
& \(11.57\) ms
& \(4.52 / 1.66\) s
& \(1.67\) s
& \(\mathbf{51.39 / 51.55}\%\) \\

SVM
& \((20,200)\)
& \(15.46\)
& \(13.83 / 10.21\) s
& \(35.95\) ms
& \(6.31 / 3.88\) s
& \(3.92\) s
& \(\mathbf{54.42 / 61.65}\%\) \\

\bottomrule
\end{tabular*}
\endgroup
\end{table}

\vspace{-1.00ex}

\paragraph{Quadrotor Navigation.}
\vspace{-1.00ex}
We further evaluate pdLIP on the single-quadrotor navigation problem
of~\citet{viljoen2026scaling}. The decision variables comprise the
quadrotor state trajectory and rotor inputs, which are optimized to steer
the vehicle from a sampled initial state toward the goal while satisfying
the discretized dynamics and avoiding an obstacle. We use a \(200\)-variable
formulation with a quadratic objective, \(46\) linear equality constraints,
\(110\) nonlinear equality constraints arising from the dynamics, \(11\)
nonlinear obstacle-avoidance inequalities, and variable bounds. In the
planar visualization, the quadrotor is represented by a point at its
horizontal position, and collision avoidance requires this point to remain
outside a disk centered at the obstacle, with clearance radius equal to the
sum of the obstacle and quadrotor radii. The nonlinear, nonconvex dynamics
and constraints make this formulation structurally more
complex than the QP benchmarks considered above. The complete formulation
and instance-generation procedure are in
Appendix~\ref{app:quadrotor_navigation}.

\vspace{-1.00ex}

\begin{table}[H]
\centering
\caption{Warm-start results on the \(200\)-dimensional quadrotor
optimization problem.}
\label{tab:quadrotor_warm_start}

\begingroup
\small
\setlength{\tabcolsep}{3pt}
\begin{tabular*}{\textwidth}{@{\extracolsep{\fill}}lcccccc@{}}
\toprule
\textbf{Problem}
& \textbf{IPOPT}
& \textbf{Cold pdProj}
& \multicolumn{3}{c}{\textbf{pdLIP + pdProj}}
& \textbf{Reduction} \\
\cmidrule(lr){4-6}

& \textbf{Iter.}
& \textbf{Iter. / Time}
& \textbf{WS Cost}
& \textbf{Iter. / Time}
& \textbf{Total Time}
& \textbf{Iter. / Time} \\
\midrule

Quadrotor
& \(9.20\)
& \(9.74 / 19.05\) s
& \(71.99\) ms
& \(8.00 / 15.74\) s
& \(15.81\) s
& \(\mathbf{17.89 / 17.00}\%\) \\

\bottomrule
\end{tabular*}
\endgroup
\end{table}

\par\noindent
\begin{minipage}{\linewidth}
\begin{minipage}[t]{0.57\linewidth}
    \vspace{0pt}
    As reported in Table~\ref{tab:quadrotor_warm_start}, warm-starting
    pdProj with pdLIP reduces the average number of refinement iterations
    from \(9.74\) to \(8.00\), a reduction of \(17.89\%\), and reduces
    total runtime by \(17.00\%\).
    Figure~\ref{fig:quadrotor_trajectory} compares the trajectory generated
    by pdLIP with the high-accuracy trajectory obtained by pdProj.
    Although pdLIP satisfies the nonlinear dynamics only approximately,
    it yields a similar obstacle-avoiding trajectory and a useful
    primal--dual warm start for pdProj. These results extend the evaluation of pdLIP beyond QP
    benchmarks and show that it can provide effective warm starts for a
    nonlinear, nonconvex optimal control problem.
    Additional solver-level statistics, selected \(n=50\) benchmark results,
and direct approximate-solution comparisons without pdProj refinement
are provided in Appendix~\ref{app:additional-experiments}.
\end{minipage}\hfill%
\begin{minipage}[t]{0.40\linewidth}
    \vspace{0pt}
    \centering
    \includegraphics[width=\linewidth]
    {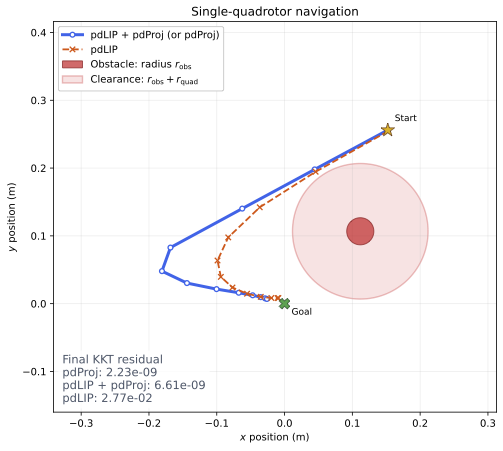}
    \captionof{figure}{Single-quadrotor trajectories generated by pdLIP
    and pdProj.}
    \label{fig:quadrotor_trajectory}
\end{minipage}
\end{minipage}
\par

\vspace{-1.00ex}

\section{Conclusions and Future work}
\vspace{-1.00ex}
We introduced pdLIP, a primal-dual interior-point method that replaces
the Newton update with a positive diagonal scaling learned
via self-supervised training, while recovering the slack and
multiplier directions from the pdProj equations. The learned iterations
avoid evaluating Hessians or solving Newton systems, and primal and dual
shifts reduce sensitivity to perturbations near constraint boundaries,
facilitating both learning and warm starting. On four 200-variable convex
and nonconvex benchmark classes, pdLIP warm starts reduce pdProj refinement
iterations by 63--67\% at the same KKT residual tolerance of \(10^{-8}\),
with negligible warm-start cost relative to refinement. Benefits also
extend to larger box-constrained QPs, portfolio optimization, SVMs, and
nonlinear control. These results show that learning only a diagonal
preconditioner can provide useful approximate solutions and reduce the
work needed by a conventional solver to reach high accuracy. Future work 
includes exploring alternative training losses and sequence models
to improve warm-start quality on more challenging nonlinear and nonconvex
problems, and investigating whether a single trained model can provide effective
warm starts across different problem sizes and distributions.

\vspace{-1.00ex}

\subsection*{AI use statement}
\vspace{-1.00ex}

Generative AI tools, primarily ChatGPT, were used to assist with drafting and
editing portions of the manuscript, LaTeX formatting, code debugging and
editing, and presentation of results. All AI-assisted content was reviewed
and verified by the authors. The authors made the final decisions regarding
the methodology, implementation, experiments, and conclusions and take full
responsibility for the contents of this work.

\vspace{-1.00ex}

\subsection*{Reproducibility Statement}
\vspace{-1.00ex}

The main paper and Appendix provide the problem formulations, data-generation
procedures, model architecture, training protocol, hyperparameter-selection
procedure, solver parameters, stopping criteria, and evaluation methodology used
in our experiments. These details are provided to enable reproduction of all
reported experimental results.



\bibliography{references}
\bibliographystyle{references}


\appendix

\section{Appendix}


\subsection{Additional Details for the Projected-Search Interior-Point System}

\subsubsection{Full Shifted Penalty--Barrier Merit Function}
\label{app:merit-function}

The projected-search interior-point method of
\citet{gill2024projected} defines a shifted primal--dual
penalty--barrier merit function, with the corresponding equations given in
\citet{gill_zhang_equations}. In the notation used in the main text, this
function is
\begin{equation}
\label{eq:merit_extended_appendix}
\begin{aligned}
&M(x,x_1,x_2,s,s_1,s_2,y,\nu,z_1,z_2,w_1,w_2;
\mu^P,\mu^A,\mu^B,
y^E,\nu^E,z_1^E,z_2^E,w_1^E,w_2^E)
\\[0.4em]
&=
f(x)
-(c(x)-s)^\top y^E
+\frac{1}{2\mu^P}\|c(x)-s\|^2
+\frac{1}{2\mu^P}\|c(x)-s+\mu^P(y-y^E)\|^2
\\
&\quad
-(Ax-b)^\top \nu^E
+\frac{1}{2\mu^A}\|Ax-b\|^2
+\frac{1}{2\mu^A}\|Ax-b+\mu^A(\nu-\nu^E)\|^2
\\[0.5em]
&\quad
-\sum_{j=1}^{n_L}
\Bigl\{
\mu^B\bigl([z_1^E]_j+[x_1^E]_j+\mu^B\bigr)
\ln\!\left([z_1+\mu^B e]_j[x_1+\mu^B e]_j^2\right)
\\
&\hspace{8em}
-[z_1\odot(x_1+\mu^B e)]_j
-2\mu^B[x_1]_j
\Bigr\}
\\
&\quad
-\sum_{j=1}^{n_U}
\Bigl\{
\mu^B\bigl([z_2^E]_j+[x_2^E]_j+\mu^B\bigr)
\ln\!\left([z_2+\mu^B e]_j[x_2+\mu^B e]_j^2\right)
\\
&\hspace{8em}
-[z_2\odot(x_2+\mu^B e)]_j
-2\mu^B[x_2]_j
\Bigr\}
\\
&\quad
-\sum_{i=1}^{m_L}
\Bigl\{
\mu^B\bigl([w_1^E]_i+[s_1^E]_i+\mu^B\bigr)
\ln\!\left([w_1+\mu^B e]_i[s_1+\mu^B e]_i^2\right)
\\
&\hspace{8em}
-[w_1\odot(s_1+\mu^B e)]_i
-2\mu^B[s_1]_i
\Bigr\}
\\
&\quad
-\sum_{i=1}^{m_U}
\Bigl\{
\mu^B\bigl([w_2^E]_i+[s_2^E]_i+\mu^B\bigr)
\ln\!\left([w_2+\mu^B e]_i[s_2+\mu^B e]_i^2\right)
\\
&\hspace{8em}
-[w_2\odot(s_2+\mu^B e)]_i
-2\mu^B[s_2]_i
\Bigr\}.
\end{aligned}
\end{equation}

The original nonlinear constraints are represented by
\[
c(x)-s=0.
\]
The additional term \(Ax-b=0\) is not part of the original problem
statement. It is an auxiliary equality system introduced inside the
projected-search method when the current iterate becomes infeasible after
reducing \(\mu^B\) or \(\mu^P\). The variable \(\nu\) is the multiplier
associated with the auxiliary feasibility-restoration constraint
\(Ax-b=0\). The parameters \(\mu^P\), \(\mu^A\), and \(\mu^B\) denote the
corresponding penalty and barrier parameters.

The auxiliary bound variables satisfy
\begin{equation}
\label{eq:auxiliary_constraints_appendix}
\begin{aligned}
x-x_1&=\ell^X,
&
x+x_2&=u^X,
\\
s-s_1&=\ell^S,
&
s+s_2&=u^S.
\end{aligned}
\end{equation}
The shifted positivity conditions are
\begin{equation}
\label{eq:shifted_positivity_appendix}
\begin{aligned}
x_1+\mu^B e&>0,
&
z_1+\mu^B e&>0,
\\
x_2+\mu^B e&>0,
&
z_2+\mu^B e&>0,
\\
s_1+\mu^B e&>0,
&
w_1+\mu^B e&>0,
\\
s_2+\mu^B e&>0,
&
w_2+\mu^B e&>0.
\end{aligned}
\end{equation}

\subsubsection{Full Newton System and Definitions}
\label{app:newton-details}

After linear transformations, the Newton step can be written in the block form
\begin{equation}
\label{eq:full_newton_system_appendix}
\resizebox{\textwidth}{!}{%
$
\left(
\begin{array}{cccccccc}
D_A & 0 & 0 & 0 & 0 & A & 0 & 0 \\[0.2em]
0 & D^{z}_{1} & 0 & 0 & 0 & I & 0 & 0 \\[0.2em]
0 & 0 & D^{z}_{2} & 0 & 0 & -I & 0 & 0 \\[0.2em]
0 & 0 & 0 & D^{w}_{1} & 0 & 0 & 0 & 0 \\[0.2em]
0 & 0 & 0 & 0 & D^{w}_{2} & 0 & 0 & 0 \\[0.2em]
-A^{\top} & -I & I & 0 & 0 & \widehat H & -J^{\top} & 0 \\[0.2em]
0 & 0 & 0 & 0 & 0 & J & 0 & D_P
\end{array}
\right)
\!
\left(
\begin{array}{c}
\Delta \nu \\
\Delta z_{1} \\
\Delta z_{2} \\
\Delta w_{1} \\
\Delta w_{2} \\
\Delta x \\
\Delta s \\
\Delta y
\end{array}
\right)
=
-
\left(
\begin{array}{c}
D_A(\nu-\pi^\nu) \\
D^{z}_{1}(z_{1}-\pi_{1}^{z}) \\
D^{z}_{2}(z_{2}-\pi_{2}^{z}) \\
D^{w}_{1}(w_{1}-\pi_{1}^{w}) \\
D^{w}_{2}(w_{2}-\pi_{2}^{w}) \\
g-J^{\top}y-A^{\top}\nu-z_{1}+z_{2} \\
D_P(y-\pi^{Y})
\end{array}
\right).
$
}
\end{equation}

Here, \(g=\nabla_x f(x)\), and \(J\) denotes the Jacobian of the nonlinear
constraint mapping \(c(x)\). The matrix $\widehat H$ is a positive definite approximation of
the exact Hessian $H(x,y)$. The matrix \(A\)
denotes the Jacobian of the auxiliary equality system \(Ax-b=0\), which is
used only during the feasibility-restoration step caused by reductions in
\(\mu^B\) or \(\mu^P\). Further details on the construction of these
quantities are given in \citep{gill_zhang_equations}.

The shifted diagonal matrices are
\begin{equation}
\label{eq:shifted_diagonal_matrices_appendix}
\begin{aligned}
X_1^\mu&=\operatorname{diag}(x_1+\mu^B e),
&
Z_1^\mu&=\operatorname{diag}(z_1+\mu^B e),
\\
X_2^\mu&=\operatorname{diag}(x_2+\mu^B e),
&
Z_2^\mu&=\operatorname{diag}(z_2+\mu^B e),
\\
S_1^\mu&=\operatorname{diag}(s_1+\mu^B e),
&
W_1^\mu&=\operatorname{diag}(w_1+\mu^B e),
\\
S_2^\mu&=\operatorname{diag}(s_2+\mu^B e),
&
W_2^\mu&=\operatorname{diag}(w_2+\mu^B e).
\end{aligned}
\end{equation}

These matrices are used to define the bound-related diagonal matrices and
shifted targets:
\begin{equation}
\label{eq:bound_targets_appendix}
\begin{aligned}
D_1^z&=X_1^\mu(Z_1^\mu)^{-1},
&
\pi_1^z
&=
\mu^B(X_1^\mu)^{-1}
\bigl(z_1^E-x_1+x_1^E\bigr),
\\[0.4em]
D_2^z&=X_2^\mu(Z_2^\mu)^{-1},
&
\pi_2^z
&=
\mu^B(X_2^\mu)^{-1}
\bigl(z_2^E-x_2+x_2^E\bigr),
\\[0.4em]
D_1^w&=S_1^\mu(W_1^\mu)^{-1},
&
\pi_1^w
&=
\mu^B(S_1^\mu)^{-1}
\bigl(w_1^E-s_1+s_1^E\bigr),
\\[0.4em]
D_2^w&=S_2^\mu(W_2^\mu)^{-1},
&
\pi_2^w
&=
\mu^B(S_2^\mu)^{-1}
\bigl(w_2^E-s_2+s_2^E\bigr).
\end{aligned}
\end{equation}

The combined matrices are defined as:
\begin{equation}
\label{eq:combined_matrices_appendix}
D_Z=D_1^z+D_2^z,
\qquad
D_B=D_1^w+D_2^w,
\end{equation}
and the combined shifted multiplier targets are:
\begin{equation}
\label{eq:combined_pi_appendix}
\pi^z=\pi_1^z-\pi_2^z,
\qquad
\pi^w=(D_B)^{-1}
\bigl(D_1^w\pi_1^w+D_2^w\pi_2^w\bigr).
\end{equation}

The penalty-related matrices and shifted targets are:
\begin{equation}
\label{eq:penalty_targets_appendix}
\begin{aligned}
D_P&=\mu^P I_m,
&
\pi^Y&=y^E-\frac{1}{\mu^P}(c(x)-s),
\\[0.4em]
D_A&=\mu^A I_A,
&
\pi^\nu&=\nu^E-\frac{1}{\mu^A}(Ax-b).
\end{aligned}
\end{equation}

\subsubsection{Derivation of the Reduced Newton System}
\label{app:reduced-system-derivation}

Starting from the full Newton system in
\eqref{eq:full_newton_system_appendix}, the variables
\(\Delta\nu,\Delta z_1,\Delta z_2,\Delta w_1,\Delta w_2,\Delta s\)
can be eliminated to obtain a reduced system in \(\Delta x\) and
\(\Delta y\):
\begin{equation}
\label{eq:reduced_newton_system_appendix}
\begin{pmatrix}
\widetilde H & -J^\top \\[0.3em]
J & D_P+D_B
\end{pmatrix}
\begin{pmatrix}
\Delta x \\[0.3em]
\Delta y
\end{pmatrix}
=
-
\begin{pmatrix}
r_1 \\[0.3em]
r_2
\end{pmatrix},
\end{equation}
where
\begin{equation}
\label{eq:residuals_appendix}
r_1=g-J^\top y-A^\top\pi^\nu-\pi^z,
\qquad
r_2=D_B(y-\pi^w)+D_P(y-\pi^Y).
\end{equation}

\begin{equation}
\label{eq:modified_primal_hessian_appendix}
\widetilde H=H^{B}+A^\top D_A^{-1}A+D_Z.
\end{equation}

Solving the second block row for \(\Delta y\) gives
\begin{equation}
\label{eq:delta_y_appendix}
\Delta y
=
-(D_P+D_B)^{-1}(r_2+J\Delta x).
\end{equation}
Substituting this expression into the first block row yields
\begin{equation}
\label{eq:delta_x_appendix}
\Delta x
=
\bigl(\widetilde H+J^\top(D_P+D_B)^{-1}J\bigr)^{-1}
\Bigl(
-r_1-J^\top(D_P+D_B)^{-1}r_2
\Bigr).
\end{equation}

This is the expression used in the main text to identify the
curvature-dependent Newton component replaced by the learned model.

\subsubsection{Remaining Slack and Multiplier Updates}
\label{app:remaining-updates}

Once \(\Delta x\) and \(\Delta y\) have been computed, the remaining updates
are obtained by back substitution. In particular,
\begin{equation}
\label{eq:delta_s_appendix}
\Delta s=-D_B(y+\Delta y-\pi^w).
\end{equation}
The multiplier updates are
\begin{equation}
\label{eq:remaining_updates_appendix}
\begin{aligned}
\Delta w_1
&=
-(S_1^\mu)^{-1}
\Bigl(
w_1\odot(s+\Delta s-\ell^S+\mu^B e)
-\mu^B w_1^E
+\mu^B(s-s^E+\Delta s)
\Bigr),
\\[0.5em]
\Delta w_2
&=
-(S_2^\mu)^{-1}
\Bigl(
w_2\odot(u^S-(s+\Delta s)+\mu^B e)
-\mu^B w_2^E
+\mu^B(s^E-s-\Delta s)
\Bigr),
\\[0.5em]
\Delta z_1
&=
-(X_1^\mu)^{-1}
\Bigl(
z_1\odot(x+\Delta x-\ell^X+\mu^B e)
-\mu^B z_1^E
+\mu^B(x-x^E+\Delta x)
\Bigr),
\\[0.5em]
\Delta z_2
&=
-(X_2^\mu)^{-1}
\Bigl(
z_2\odot(u^X-(x+\Delta x)+\mu^B e)
-\mu^B z_2^E
+\mu^B(x^E-x-\Delta x)
\Bigr).
\end{aligned}
\end{equation}



\subsubsection{Shifted Path-Following Residual Used in Training}
\label{app:Formula for F}

The training objective uses the same shifted path-following residual \(F\)
introduced in Section~\eqref{sec:pdproj_background}. Suppressing fixed
arguments, the residual stacks the stationarity, shifted feasibility,
restoration, bound, and shifted complementarity residual blocks:
\begin{equation}
\label{eq:F_normalized_gradient}
F(v)
=
\begin{pmatrix}
\nabla f(x)-J(x)^\top y-A^\top\nu-z_1+z_2 \\[4pt]
y-w_1+w_2 \\[4pt]
c(x)-s+\mu^P(y-y^E) \\[4pt]
Ax-b+\mu^A(\nu-\nu^E) \\[4pt]
E_xx-b_x \\[4pt]
L_xs-h_x \\[6pt]
z_1\odot x_1
+\mu^B(z_1-z_1^E)
+\mu^B(x_1-x_1^E) \\[6pt]
z_2\odot x_2
+\mu^B(z_2-z_2^E)
+\mu^B(x_2-x_2^E) \\[6pt]
w_1\odot s_1
+\mu^B(w_1-w_1^E)
+\mu^B(s_1-s_1^E) \\[6pt]
w_2\odot s_2
+\mu^B(w_2-w_2^E)
+\mu^B(s_2-s_2^E)
\end{pmatrix}.
\end{equation}
The rows involving \(A\) and the corresponding restoration variables are
omitted when the restoration system is inactive. Likewise, components
associated with missing bounds are omitted.

The residual term in the training objective uses the squared Frobenius norm,
\begin{equation}
\label{eq:F_frobenius_norm}
\|F(v)\|_F^2
=
\sum_{i,j} F_{ij}(v)^2.
\end{equation}


\subsection{Theoretical Details}

\subsubsection{Positive Diagonal Scaling}
\label{app:positive-scaling}

\begin{proposition}
\label{prop:positive_scaling_descent}
Let \(q_k\) be defined in~\eqref{eq:x_update_main} and
\(P_k\succ0\) be the learned diagonal preconditioner.
Suppose the learned primal direction is
\[
\Delta x_k=-P_kq_k,
\]
and the remaining components of the primal--dual direction
\(\Delta v_k\) are obtained from the unchanged pdProj
back-substitution equations. Then, whenever
\(\nabla M(v_k)\neq0\), the resulting full direction
\(\Delta v_k\) is a descent direction for the penalty--barrier merit
function \(M\); that is,
\[
\nabla M(v_k)^\top\Delta v_k<0.
\]
\end{proposition}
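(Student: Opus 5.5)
The plan is to compare the learned direction with the exact pdProj Newton direction and to use the Schur-complement structure of the elimination. Let \(H^M_k\succ0\) be the matrix of the pdProj Newton system \(H^M_k\Delta v=-\nabla M(v_k)\), and let \(\Delta v^N_k\) be its solution, so that \(\nabla M(v_k)=-H^M_k\Delta v^N_k\). The unchanged back-substitution formulas (\eqref{eq:dual_recovery} together with \eqref{eq:delta_s_appendix}--\eqref{eq:remaining_updates_appendix}) determine all components other than \(\Delta x\) as an affine function of \(\Delta x\). I will write this map as
\[
\Delta v(\Delta x)=\Delta v^0_k+L_k\Delta x .
\]
Here \(\Delta v^0_k\) is the back-substituted direction at \(\Delta x=0\), and the \(x\)-block of \(L_k\) is the identity. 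The exact Newton step is \(\Delta v^N_k=\Delta v(\Delta x^N_k)\) with \(\Delta x^N_k=-G_k^{-1}q_k\), and the learned step is \(\Delta v_k=\Delta v(-P_kq_k)\).

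The key structural facts concern the rows that were eliminated. For every \(\Delta x\), these rows of \(H^M_k\Delta v(\Delta x)=-\nabla M(v_k)\) hold exactly; this is precisely what back substitution enforces. Consequently \(H^M_k L_k\) vanishes in the eliminated rows. Taking inner products then gives \(L_k^\top H^M_kL_k=G_k\), the Schur complement in \eqref{eq:x_update_main}. The same reasoning gives \(L_k^\top H^M_k\Delta v^0_k=q_k\), since the reduced right-hand side is obtained exactly this way, and it shows that \(\Delta v^0_k\) is \(H^M_k\)-orthogonal to \(\operatorname{range}L_k\).

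The main computation then runs as follows. We have \(\nabla M(v_k)^\top\Delta v_k=-(\Delta v^N_k)^\top H^M_k\Delta v_k\). Expanding both vectors through the affine map and using the identities above should yield
\[
\nabla M(v_k)^\top\Delta v_k=-(\Delta v^0_k)^\top H^M_k\Delta v^0_k-q_k^\top P_kq_k .
\]
Both terms are nonpositive because \(H^M_k\succ0\) and \(P_k\succ0\). If the sum vanishes, then \(\Delta v^0_k=0\) and \(q_k=0\). This forces \(\Delta x^N_k=0\) and hence \(\Delta v^N_k=0\), so \(\nabla M(v_k)=0\). Therefore \(\nabla M(v_k)\neq0\) implies strict descent.

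I expect the main obstacle to be the first step. The block system \eqref{eq:full_newton_system_appendix} is written "after linear transformations" and is not itself symmetric. I must therefore identify a nonsingular transformation \(T_k\), composed of row scalings by the shifted diagonals, such that \(T_k\) times the displayed system equals the symmetric positive definite system \(H^M_k\Delta v=-\nabla M(v_k)\). I must then check that the elimination order used in back substitution (first \(\Delta y\), then \(\Delta s\), then \(\Delta w\) and \(\Delta z\)) corresponds to a block Gaussian elimination of that symmetric system, so that the two Schur-complement identities hold. I would first try taking \(T_k\) to be the block-diagonal scaling that symmetrizes the \(D^z\), \(D^w\) and \(D_P\) rows, exactly as in \cite{gill2024projected}, and then verify the identities by directly comparing blocks.
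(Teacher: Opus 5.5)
Your argument is correct in substance, but it takes a different route from the paper. The paper never works with the true \(H^M_k\). It reads \(P_k^{-1}\Delta x_k=-q_k\) as the reduced equation of a \emph{modified} Newton system, with \(\widetilde H_k\) replaced by the implicit choice \(\widetilde H_k^{\mathrm L}=P_k^{-1}-J_k^\top D_k^{-1}J_k\), so that the reduced matrix is \(P_k^{-1}\succ0\). It then cites the positive-definiteness relation of \cite{gill2024projected} to conclude that the corresponding merit Hessian \(H_k^{M,\mathrm L}\) is positive definite. It asserts that unchanged back substitution gives \(H_k^{M,\mathrm L}\Delta v_k=-\nabla M(v_k)\), and descent follows in one line.

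That proof is short and matches the modified-Newton reading of \(P_k\) in the main text. However, it rests on a cited inertia result and on the unproved claim that \(\widetilde H_k\) enters only the \((x,x)\) block while back substitution exactly solves the eliminated rows. You instead derive these structural facts as Schur-complement identities for the original \(H^M_k\). In exchange you get the explicit formula \(\nabla M(v_k)^\top\Delta v_k=-(\Delta v^0_k)^\top H^M_k\Delta v^0_k-q_k^\top P_kq_k\), which separates the descent from the eliminated components from that of the learned primal step.

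You can even drop \(\Delta v^N_k\) and \(G_k^{-1}\). Write \(\nabla M(v_k)^\top\Delta v_k=\nabla M(v_k)^\top\Delta v^0_k+(L_k^\top\nabla M(v_k))^\top\Delta x_k\). Since \(\Delta v^0_k\) has zero \(x\)-part and satisfies the eliminated rows, \(\nabla M(v_k)^\top\Delta v^0_k=-(\Delta v^0_k)^\top H^M_k\Delta v^0_k\). This shows that the only positivity used is \(P_k\succ0\) together with positive definiteness of the principal block of \(H^M_k\) for the eliminated variables. That is consistent with the learned step never using \(\widehat H_k\).

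Two details need correcting.

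\textbf{The identity involving \(q_k\).} The claim \(L_k^\top H^M_k\Delta v^0_k=q_k\) is false. By symmetry, \(L_k^\top H^M_k\Delta v^0_k=(H^M_kL_k)^\top\Delta v^0_k=0\), because \(H^M_kL_k\) vanishes in the eliminated rows and \(\Delta v^0_k\) vanishes in the \(x\)-rows. This is exactly the \(H^M_k\)-orthogonality you assert in the same sentence, so the two claims contradict each other unless \(q_k=0\). The identity you need is \(L_k^\top\nabla M(v_k)=q_k\), equivalently \(L_k^\top H^M_k\Delta v^N_k=-q_k\). Your final formula is still right: orthogonality kills the cross terms, and the last term is \(-(\Delta x^N_k)^\top G_k\Delta x_k=-q_k^\top P_kq_k\).

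\textbf{The form of \(T_k\).} Do not expect \(T_k\) to be a block-diagonal scaling. Already from the penalty terms, \(H^M_k\) has \(+J_k^\top\) in its \((x,y)\) block and an extra \(2J_k^\top D_P^{-1}J_k\) in its \((x,x)\) block. The \(x\)-row of the displayed Newton systems instead carries \(-J_k^\top\) against \(\Delta y\) and no such term. The two are reconciled by adding \(2J_k^\top D_P^{-1}\) times the unreduced \(y\)-row to the \(x\)-row. What your identities actually require is that \(T_k\) be block upper triangular with identity \((x,x)\) block. It may rescale eliminated rows and add multiples of them to the \(x\)-row, but it must never rescale the \(x\)-row. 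Such a \(T_k\) leaves the back-substitution map, the Schur complement and the reduced right-hand side unchanged. If the \(x\)-row were rescaled, \(q_k\) would pick up that factor and \(-P_kq_k\) could fail to be a descent direction.
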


\begin{proof}
Since the learned scaling is defined by
\[
p_{k,j}
=
|\hat p_{k,j}|+\delta,
\qquad
\delta>0,
\]
we have \(p_{k,j}>0\) for all \(j\). Hence,
\[
P_k\succ0
\qquad\text{and}\qquad
P_k^{-1}\succ0.
\]
The learned update
\[
\Delta x_k=-P_kq_k
\]
is therefore equivalently written as
\[
P_k^{-1}\Delta x_k=-q_k.
\]

The reduced projected-search system of
\citep{gill2024projected} has the form
\[
\left(
\widetilde H_k+J_k^\top D_k^{-1}J_k
\right)\Delta x_k=-q_k.
\]
Thus, the learned update corresponds to the implicit choice
\[
\widetilde H_k^{\mathrm L}
=
P_k^{-1}
-
J_k^\top D_k^{-1}J_k,
\]
for which
\[
\widetilde H_k^{\mathrm L}
+
J_k^\top D_k^{-1}J_k
=
P_k^{-1}\succ0.
\]

By the positive-definiteness relation established in
\citep{gill2024projected}, the corresponding approximate merit Hessian
\(H_k^{M,\mathrm L}\) is therefore positive definite. Since the remaining
components of \(\Delta v_k\) are obtained from the unchanged
back-substitution equations, the full direction satisfies
\[
H_k^{M,\mathrm L}\Delta v_k
=
-\nabla M(v_k).
\]
Consequently,
\[
\nabla M(v_k)^\top\Delta v_k
=
-\nabla M(v_k)^\top
\left(H_k^{M,\mathrm L}\right)^{-1}
\nabla M(v_k)
<0
\]
whenever \(\nabla M(v_k)\neq0\). Hence, \(\Delta v_k\) is a descent
direction for the all-shifted penalty--barrier merit function.
\end{proof}

\subsubsection{Local Consistency of the Augmented Training Objective}
\label{app:augmented-loss-proof}

Fix the pdProj estimates \(\mathcal{E}\) and parameters
\(\mu^P\) and \(\mu^B\). Define the augmented loss
\begin{equation}
\label{eq:augmented_loss_appendix}
\ell_\epsilon(v)
=
M(v)
+
\log\left(
\|F(v)\|_F^2+\epsilon
\right),
\qquad
\epsilon>0.
\end{equation}

\begin{proposition}
\label{prop:augmented_loss_consistency}
Let \(v^\star\) lie in the domain of the shifted penalty--barrier merit
function \(M\). Suppose that, in a neighborhood of \(v^\star\), the
residual--merit relationship
\[
F(v)=U(v)\nabla M(v)
\]
holds, where \(U(v)\) is nonsingular, and suppose that \(F\) is
differentiable at \(v^\star\). Then:

\begin{enumerate}
    \item If \(\nabla M(v^\star)=0\), then
    \[
    F(v^\star)=0
    \qquad\text{and}\qquad
    \nabla \ell_\epsilon(v^\star)=0.
    \]
    Thus, every stationary point of \(M\) in the shifted interior remains
    a stationary point of \(\ell_\epsilon\).

    \item If, in addition, \(v^\star\) is a local minimizer of \(M\), then
    \(v^\star\) is also a local minimizer of \(\ell_\epsilon\).
\end{enumerate}
\end{proposition}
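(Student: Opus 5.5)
Part 1 comes straight from the residual--merit relationship, and Part 2 from the fact that the residual term can only increase away from a zero of $F$.

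\emph{Part 1.} Since $\nabla M(v^\star)=0$ and $F(v^\star)=U(v^\star)\nabla M(v^\star)$, we get $F(v^\star)=0$. Next compute the gradient of the augmented loss by the chain rule:
\[
\nabla \ell_\epsilon(v)=\nabla M(v)+\frac{2\,F'(v)^\top F(v)}{\|F(v)\|_F^2+\epsilon},
\]
where $F'(v)$ is the Jacobian of $F$, which exists at $v^\star$ by assumption. Here $\|\cdot\|_F^2$ is just the sum of squared entries, so the derivative of $\|F\|_F^2$ is $2F'^\top F$. At $v^\star$ both terms vanish: the first by hypothesis, the second because $F(v^\star)=0$ and $\epsilon>0$ keeps the denominator positive. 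Hence $\nabla\ell_\epsilon(v^\star)=0$. This needs only differentiability of $M$ on its open domain, the shifted interior, together with differentiability of $F$ at $v^\star$.

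\emph{Part 2.} Let $N$ be a neighborhood of $v^\star$ inside the shifted interior on which the relationship $F=U\nabla M$ holds and on which $M(v)\ge M(v^\star)$. Such an $N$ exists because the domain is open and $v^\star$ is a local minimizer. Since $\log$ is increasing and $\|F(v)\|_F^2\ge 0=\|F(v^\star)\|_F^2$, every $v\in N$ satisfies $\log(\|F(v)\|_F^2+\epsilon)\ge\log\epsilon=\log(\|F(v^\star)\|_F^2+\epsilon)$. Adding this to $M(v)\ge M(v^\star)$ gives $\ell_\epsilon(v)\ge\ell_\epsilon(v^\star)$ on $N$. Strictness is not claimed, so nothing more is needed. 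If $v^\star$ were a strict local minimizer of $M$, the same inequality would carry strictness over.

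\emph{Main obstacle.} The only delicate point is bookkeeping rather than analysis. The nonsingularity of $U$ is not actually needed for either direction as stated; it matters only for the converse, that $F=0$ implies $\nabla M=0$, which we do not need. We must also make sure the neighborhoods are taken inside the open shifted interior, so that $M$, and hence $\ell_\epsilon$, is finite and the comparison is meaningful. Finally, the Frobenius norm of the stacked vector $F$ must be read as its Euclidean norm so that the gradient formula above holds.
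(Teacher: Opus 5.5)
Your proof is correct and follows the paper's argument essentially step for step: $F(v^\star)=0$ from the residual--merit relation, the chain-rule gradient $2J_F^\top F/(\|F\|_F^2+\epsilon)$ of the log term vanishing at $v^\star$, and the splitting $\ell_\epsilon(v)-\ell_\epsilon(v^\star)=[M(v)-M(v^\star)]+\log(1+\|F(v)\|_F^2/\epsilon)$ into two nonnegative terms, which is exactly your monotonicity-of-$\log$ step. Your side remark is also accurate: nonsingularity of $U$ plays no role in either part, and it is only genuinely used, via a locally bounded $U^{-1}$, in the subsequent corollary.
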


\begin{corollary}
\label{cor:augmented_loss_convergence}
Under the conditions of
Proposition~\ref{prop:augmented_loss_consistency}, suppose additionally
that \(v^\star\) is a local minimizer of \(M\) and that \(U(v)^{-1}\) is
locally bounded near \(v^\star\). If a sequence
\(\{v_j\}\) in a neighborhood of \(v^\star\) satisfies
\[
\ell_\epsilon(v_j)
\rightarrow
\ell_\epsilon(v^\star),
\]
then
\[
M(v_j)\rightarrow M(v^\star),
\qquad
\|F(v_j)\|_F\rightarrow0,
\qquad
\|\nabla M(v_j)\|\rightarrow0.
\]
\end{corollary}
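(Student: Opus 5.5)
The statement to prove is Corollary~\ref{cor:augmented_loss_convergence}. The idea is to split $\ell_\epsilon$ into its two summands, show each is bounded below by its value at $v^\star$, and squeeze. By Proposition~\ref{prop:augmented_loss_consistency}, $\nabla M(v^\star)=0$ at a local minimizer of $M$, so $F(v^\star)=U(v^\star)\nabla M(v^\star)=0$. Hence
\[
\ell_\epsilon(v^\star)=M(v^\star)+\log\epsilon .
\]

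Take a neighborhood $\mathcal N$ of $v^\star$ in which three things hold: $M(v)\ge M(v^\star)$, the relation $F=U\nabla M$ is valid, and $\|U(v)^{-1}\|\le C$. Since the sequence lies in this neighborhood (shrinking it if needed), for every $j$ we have
\[
\ell_\epsilon(v_j)-\ell_\epsilon(v^\star)
=\bigl[M(v_j)-M(v^\star)\bigr]+\log\!\Bigl(1+\tfrac{\|F(v_j)\|_F^2}{\epsilon}\Bigr).
\]
Both brackets are nonnegative: the first by local minimality, the second because $\|F(v_j)\|_F^2\ge0$. The left side tends to $0$, so each nonnegative summand tends to $0$. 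This gives $M(v_j)\to M(v^\star)$ and $\log(1+\|F(v_j)\|_F^2/\epsilon)\to0$. Since $\epsilon>0$ is fixed and $\log(1+t)$ is continuous and strictly increasing with value $0$ at $t=0$, we get $\|F(v_j)\|_F\to0$.

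For the gradient, write $\nabla M(v_j)=U(v_j)^{-1}F(v_j)$. Then
\[
\|\nabla M(v_j)\|\le\|U(v_j)^{-1}\|\,\|F(v_j)\|\le C\|F(v_j)\|_F\to0,
\]
using the equivalence of norms on the finite-dimensional space where $F$ takes values.

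The main subtlety is the phrase ``in a neighborhood'' in the hypothesis. The argument needs a single neighborhood on which local minimality of $M$, the residual--merit relation, and the bound on $U^{-1}$ all hold simultaneously. I would take the intersection of the three neighborhoods and interpret the hypothesis as placing $\{v_j\}$ inside it. No convergence $v_j\to v^\star$ is needed, and the differentiability of $F$ is used only through Proposition~\ref{prop:augmented_loss_consistency}.
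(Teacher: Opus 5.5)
Your proof is correct and matches the paper's argument essentially step for step: split $\ell_\epsilon(v_j)-\ell_\epsilon(v^\star)$ into the two nonnegative terms $M(v_j)-M(v^\star)$ and $\log(1+\|F(v_j)\|_F^2/\epsilon)$, squeeze each to zero, then get $\nabla M(v_j)=U(v_j)^{-1}F(v_j)\to 0$ from the local bound on $U^{-1}$. One small attribution fix: $\nabla M(v^\star)=0$ does not come from Proposition~\ref{prop:augmented_loss_consistency}, which assumes it, but from the first-order necessary condition at a local minimizer of the differentiable $M$ on the open shifted interior; your choice of a single neighborhood on which minimality, $F=U\nabla M$, and $\|U^{-1}\|\le C$ all hold makes explicit what the paper leaves implicit.
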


\paragraph{Proof of Proposition~\ref{prop:augmented_loss_consistency}.}

\begin{proof}
Suppose first that
\[
\nabla M(v^\star)=0.
\]
The residual--merit relationship established in
\citep{gill2024projected} gives
\[
F(v)=U(v)\nabla M(v),
\]
where \(U(v)\) is nonsingular in the shifted interior. Hence,
\[
F(v^\star)=0.
\]

Define
\[
R_\epsilon(v)
=
\log\left(
\|F(v)\|_F^2+\epsilon
\right),
\]
so that
\[
\ell_\epsilon(v)=M(v)+R_\epsilon(v).
\]
By the chain rule,
\[
\nabla R_\epsilon(v)
=
\frac{
2J_F(v)^\top F(v)
}{
\|F(v)\|_F^2+\epsilon
},
\]
where \(J_F(v)\) denotes the Jacobian of \(F\), with \(F\) viewed as a
vectorized residual. Since \(F(v^\star)=0\),
\[
\nabla R_\epsilon(v^\star)=0.
\]
Therefore,
\[
\nabla \ell_\epsilon(v^\star)
=
\nabla M(v^\star)
+
\nabla R_\epsilon(v^\star)
=
0.
\]
This proves the first statement.

Now suppose additionally that \(v^\star\) is a local minimizer of \(M\).
Then there exists a neighborhood \(\mathcal{N}\) of \(v^\star\) such that
\[
M(v)\geq M(v^\star),
\qquad
v\in\mathcal{N}.
\]
From the first part,
\[
F(v^\star)=0.
\]
Therefore, for every \(v\in\mathcal{N}\),
\begin{align}
\ell_\epsilon(v)-\ell_\epsilon(v^\star)
&=
M(v)-M(v^\star)
+
\log\left(
\frac{\|F(v)\|_F^2+\epsilon}
{\|F(v^\star)\|_F^2+\epsilon}
\right)
\nonumber\\
&=
M(v)-M(v^\star)
+
\log\left(
1+\frac{\|F(v)\|_F^2}{\epsilon}
\right).
\end{align}
Both terms on the right-hand side are nonnegative. Consequently,
\[
\ell_\epsilon(v)-\ell_\epsilon(v^\star)\geq0,
\]
and hence \(v^\star\) is a local minimizer of
\(\ell_\epsilon\).
\end{proof}

\paragraph{Proof of Corollary~\ref{cor:augmented_loss_convergence}.}

\begin{proof}
Let \(\{v_j\}\subset\mathcal{N}\) satisfy
\[
\ell_\epsilon(v_j)
\rightarrow
\ell_\epsilon(v^\star).
\]
Since \(v^\star\) is a local minimizer of \(M\), the proof of
Proposition~\ref{prop:augmented_loss_consistency} gives
\[
\ell_\epsilon(v_j)-\ell_\epsilon(v^\star)
=
M(v_j)-M(v^\star)
+
\log\left(
1+\frac{\|F(v_j)\|_F^2}{\epsilon}
\right),
\]
where both terms on the right-hand side are nonnegative. Hence each term
must converge to zero. Therefore,
\[
M(v_j)\rightarrow M(v^\star)
\]
and
\[
\log\left(
1+\frac{\|F(v_j)\|_F^2}{\epsilon}
\right)
\rightarrow0,
\]
which implies
\[
\|F(v_j)\|_F^2\rightarrow0,
\]
and hence
\[
\|F(v_j)\|_F\rightarrow0.
\]

By the residual--merit relationship of
\citep{gill2024projected},
\[
F(v)=U(v)\nabla M(v).
\]
Since \(U(v)^{-1}\) is locally bounded near \(v^\star\),
\[
\nabla M(v_j)
=
U(v_j)^{-1}F(v_j),
\]
and therefore
\[
\|\nabla M(v_j)\|\rightarrow0.
\]
\end{proof}


\subsection{Training and Evaluation Details}

\subsubsection{One-Step Training Protocol}
\label{app:unrolling_details}

pdLIP uses an unroll length of one. At IPM iteration \(k\), the current
primal--dual state and associated features are passed through the shared
LSTM--MLP to obtain the learned scaling \(P_k\). The resulting primal
direction is combined with the analytical pdProj back-substitution equations
and projection to obtain \(v_{k+1}\). The loss
\(\mathcal{L}_k\) in~\eqref{eq:loss_function} is then evaluated and
immediately backpropagated before proceeding to the next IPM iteration.

We experimented with longer unrolls in which \(K>1\) learned pdProj search
steps were taken before applying the \(O/M/F\) logic. In this setting, errors
in the learned scaling are propagated through the coupled primal--dual state,
analytical back-substitution, and repeated projection across several
successive steps. These errors can therefore accumulate throughout the
unroll, while the longer computational graph also increases memory
requirements and susceptibility to vanishing or exploding gradients. In
preliminary experiments, this formulation resulted in substantially less
stable training.

We also considered applying the \(O/M/F\) logic after every learned step
while delaying backpropagation across multiple iterations. This was similarly
unstable because the \(O/M/F\) logic can modify the primal--dual estimates and
the barrier and penalty parameters. Consequently, successive learned steps
may correspond to different shifted merit and residual systems, so the gradients used for backpropagation are computed across a sequence of
iterations in which the underlying shifted optimization problem may change.

Based on these observations, we evaluate and backpropagate
\(\mathcal{L}_k\) immediately after each learned pdLIP update and then apply
the pdProj \(O/M/F\) logic before constructing the next iteration.

\subsubsection{Common KKT Residual}
\label{app:kkt_residual}

We use a solver-neutral KKT residual to measure convergence across the
learned optimizer, IPOPT, and pdProj. Following the notation of the main
text, let \(y\) denote the Lagrange multiplier associated with
\(c(x)-s=0\), and let \(z_1,z_2,w_1,w_2\) denote the multipliers associated
with the bound constraints on \(x\) and \(s\).

The stationarity residuals with respect to \(x\) and \(s\) are
\[
r_x
=
\nabla f(x)-J(x)^\top y-z_1+z_2,
\]
and
\[
r_s
=
y-w_1+w_2,
\]
where \(J(x)\) is the Jacobian of \(c(x)\). We use the scaled stationarity
residual
\[
\widehat r_{\mathrm{stat}}
=
\max\left\{
\frac{\|r_x\|_\infty}
{\max\{1,\|\nabla f(x)\|_\infty\}},
\;
\|r_s\|_\infty
\right\}.
\]

The common KKT residual is defined as
\[
\begin{aligned}
r_{\mathrm{KKT}}
=
\max\Big\{&
\widehat r_{\mathrm{stat}},
\ \|c(x)-s\|_\infty,\\
&
\|\min(x-\ell^X,0)\|_\infty,
\ \|\min(u^X-x,0)\|_\infty,\\
&
\|\min(s-\ell^S,0)\|_\infty,
\ \|\min(u^S-s,0)\|_\infty,\\
&
\|z_1\odot(x-\ell^X)\|_\infty,
\ \|z_2\odot(u^X-x)\|_\infty,\\
&
\|w_1\odot(s-\ell^S)\|_\infty,
\ \|w_2\odot(u^S-s)\|_\infty,\\
&
\|\min(z_1,0)\|_\infty,
\ \|\min(z_2,0)\|_\infty,
\ \|\min(w_1,0)\|_\infty,
\ \|\min(w_2,0)\|_\infty
\Big\}.
\end{aligned}
\]
Terms associated with missing constraints or infinite bounds are omitted.
A problem is counted as converged when
\(r_{\mathrm{KKT}}\leq\tau\), where \(\tau\) is the tolerance specified
for the corresponding experiment.

\subsubsection{The \texorpdfstring{$\chi$}{chi} Measure}
\label{app:chi_measure}

Following \citep[Section~5.1]{gill2024projected}, we use the progress measure
\begin{equation}
\chi(v,\mu^B)
=
\chi_P(v)
+
\chi_D(v)
+
\chi_C(v,\mu^B),
\end{equation}
where \(\chi_P\), \(\chi_D\), and \(\chi_C\) measure primal feasibility,
dual stationarity, and complementarity, respectively. This measure is used
for best-iterate selection and in the O-, M-, and F-iteration update logic.

Recall the bound slacks
\[
x_1=x-\ell^X,
\qquad
x_2=u^X-x,
\qquad
s_1=s-\ell^S,
\qquad
s_2=u^S-s,
\]
and the set of finite primal--dual bound pairs
\[
\mathcal{B}
=
\{(x_1,z_1),(x_2,z_2),(s_1,w_1),(s_2,w_2)\}.
\]
Components corresponding to infinite bounds are omitted. All vector minima,
maxima, absolute values, and products below are interpreted componentwise.

\paragraph{Primal feasibility.}
We define
\begin{equation}
\label{eq:chi_primal}
\chi_P(v)
=
\left\|
\begin{pmatrix}
\dfrac{c(x)-s}{\max\{1,\|s\|_2\}} \\[8pt]
\min\{x_1,0\} \\[4pt]
\min\{x_2,0\} \\[4pt]
\min\{s_1,0\} \\[4pt]
\min\{s_2,0\}
\end{pmatrix}
\right\|_2.
\end{equation}

\paragraph{Dual stationarity.}
Define
\begin{equation}
\sigma_D
=
\max\left\{
1,
\|\nabla f(x)\|_2,
\|J(x)^\top y\|_2,
\|z_1\|_2,
\|z_2\|_2
\right\}.
\end{equation}
Then
\begin{equation}
\label{eq:chi_dual}
\chi_D(v)
=
\max\left\{
\frac{
\|\nabla f(x)-J(x)^\top y-z_1+z_2\|_2
}{\sigma_D},
\;
\|y-w_1+w_2\|_2
\right\}.
\end{equation}

\paragraph{Complementarity.}
For each finite primal--dual bound pair
\((a,b)\in\mathcal{B}\), define
\begin{align}
q_1(a,b)
&=
\max\left\{
|\min\{a,b,0\}|,
|a\odot b|
\right\},
\\
q_2(a,b,\mu^B)
&=
\max\Bigl\{
\mu^B e,\,
|\min\{a+\mu^B e,b+\mu^B e,0\}|,\notag\\
&\hspace{7em}
|(a+\mu^B e)\odot(b+\mu^B e)|
\Bigr\},
\end{align}
where \(e\) is the vector of ones of the appropriate dimension. The
complementarity contribution associated with \((a,b)\) is
\[
q(a,b,\mu^B)
=
\min\{q_1(a,b),q_2(a,b,\mu^B)\}.
\]
Stacking these contributions over all finite bound pairs gives
\begin{equation}
\label{eq:chi_complementarity}
\chi_C(v,\mu^B)
=
\frac{1}{\max\{1,\|\nabla f(x)\|_2\}}
\left\|
\operatorname{col}_{(a,b)\in\mathcal{B}}
q(a,b,\mu^B)
\right\|_2.
\end{equation}

\subsubsection{Primal and Dual Infeasibility Measures}
\label{app:infeasibility_measures}

We also use the primal and dual infeasibility measures defined in
\citep{gill2024projected}. Let
\begin{equation}
\label{eq:sigma_appendix}
\sigma_{\mathrm D}
:=
\max\!\left\{
1,\;
\|\nabla f(x)\|_\infty,\;
\|J(x)^\top y\|_\infty,\;
\|A^\top \nu\|_\infty,\;
\|z_1\|_\infty,\;
\|z_2\|_\infty
\right\}.
\end{equation}
The primal infeasibility is
\begin{equation}
\label{eq:primal_infeasibility_appendix}
e_P(x,s)
=
\left\|
\begin{pmatrix}
\dfrac{c(x)-s}{\max\{1,\|s\|_\infty\}} \\[8pt]
\min\{0,s\} \\[6pt]
\dfrac{Ax-b}{\max\{1,\|x\|_\infty\}} \\[10pt]
\dfrac{\min\{0,x-\ell\}}{\max\{1,\|x\|_\infty\}} \\[10pt]
\dfrac{\min\{0,u-x\}}{\max\{1,\|x\|_\infty\}}
\end{pmatrix}
\right\|_\infty.
\end{equation}
The dual infeasibility is
\begin{equation}
\label{eq:dual_infeasibility_appendix}
e_D(x,s,y,\nu,z_1,z_2,w)
=
\left\|
\begin{pmatrix}
\dfrac{\nabla f(x)-J(x)^\top y-A^\top\nu-z_1+z_2}
{\sigma_{\mathrm D}} \\[10pt]
\|w-y\|_\infty \\[6pt]
w\odot\min\{1,s\} \\[6pt]
z_1\odot\min\{1,x-\ell\} \\[6pt]
z_2\odot\min\{1,u-x\}
\end{pmatrix}
\right\|_\infty.
\end{equation}

\subsubsection{Hyperparameter Selection and Sensitivity}
\label{app:hyperparameter_tuning}

For each candidate configuration, we evaluate the mean primal and dual
infeasibility measures \(e_P\) and \(e_D\) and rank the configurations using
the combined validation criterion
\begin{equation}
\label{eq:combined_validation_metric}
e_{\mathrm{comb}}
=
\sqrt{e_P^2+e_D^2}.
\end{equation}
Lower values indicate better validation performance, and we select the
hyperparameter configuration with the lowest \(e_{\mathrm{comb}}\).
For the \(50\)-dimensional constrained QP sensitivity experiments below,
training uses \(200\) IPM iterations; the remaining sensitivity experiments
use \(300\) IPM iterations.


\begin{table}[H]
\centering
\caption{Hyperparameter sensitivity for convex QP RHS,
\(n=50,\;m_{\mathrm{eq}}=25,\;m_{\mathrm{ineq}}=25\).}
\label{tab:hparam_qp_convex_rhs_50}
\begin{tabular}{cccccc}
\toprule
\textbf{Rank} & \(\boldsymbol{\mu_B^0}\) & \(\boldsymbol{\mu_P^0}\) & \textbf{Mean \(e_P\)} & \textbf{Mean \(e_D\)} & \textbf{$e_{\mathrm{comb}}$} \\
\midrule
\(1\) & \(10^{-4}\) & \(10^{4}\) & \(0.0071\) & \(0.0027\) & \(0.0076\) \\
\(2\) & \(0.1\) & \(10^{3}\) & \(0.0013\) & \(0.0090\) & \(0.0091\) \\
\(3\) & \(0.1\) & \(5\times10^{3}\) & \(0.0134\) & \(0.0024\) & \(0.0136\) \\
\(4\) & \(10^{-3}\) & \(10^{3}\) & \(0.0092\) & \(0.0133\) & \(0.0162\) \\
\(5\) & \(0.01\) & \(10^{3}\) & \(0.0074\) & \(0.0211\) & \(0.0223\) \\
\bottomrule
\end{tabular}
\end{table}

\begin{table}[H]
\centering
\caption{Hyperparameter sensitivity for convex QP ALL,
\(n=50,\;m_{\mathrm{eq}}=25,\;m_{\mathrm{ineq}}=25\).}
\label{tab:hparam_qp_convex_all_50}
\begin{tabular}{cccccc}
\toprule
\textbf{Rank} & \(\boldsymbol{\mu_B^0}\) & \(\boldsymbol{\mu_P^0}\) & \textbf{Mean \(e_P\)} & \textbf{Mean \(e_D\)} & \textbf{$e_{\mathrm{comb}}$} \\
\midrule
\(1\) & \(0.01\) & \(5\times10^{3}\) & \(0.0013\) & \(0.0018\) & \(0.0022\) \\
\(2\) & \(10^{-3}\) & \(5\times10^{3}\) & \(0.0328\) & \(0.0034\) & \(0.0330\) \\
\(3\) & \(10^{-3}\) & \(10^{3}\) & \(0.0335\) & \(0.0373\) & \(0.0501\) \\
\(4\) & \(0.1\) & \(5\times10^{3}\) & \(0.0487\) & \(0.0142\) & \(0.0507\) \\
\(5\) & \(0.01\) & \(10^{3}\) & \(0.1082\) & \(0.0278\) & \(0.1117\) \\
\bottomrule
\end{tabular}
\end{table}

\begin{table}[H]
\centering
\caption{Hyperparameter sensitivity for nonconvex QP RHS,
\(n=50,\;m_{\mathrm{eq}}=25,\;m_{\mathrm{ineq}}=25\).}
\label{tab:hparam_qp_nonconvex_rhs_50}
\begin{tabular}{cccccc}
\toprule
\textbf{Rank} & \(\boldsymbol{\mu_B^0}\) & \(\boldsymbol{\mu_P^0}\) & \textbf{Mean \(e_P\)} & \textbf{Mean \(e_D\)} & \textbf{$e_{\mathrm{comb}}$} \\
\midrule
\(1\) & \(0.1\) & \(5\times10^{3}\) & \(0.0072\) & \(0.0030\) & \(0.0078\) \\
\(2\) & \(0.1\) & \(10^{3}\) & \(0.0014\) & \(0.0086\) & \(0.0087\) \\
\(3\) & \(10^{-3}\) & \(10^{3}\) & \(0.0083\) & \(0.0091\) & \(0.0123\) \\
\(4\) & \(0.01\) & \(5\times10^{3}\) & \(0.0326\) & \(0.0035\) & \(0.0327\) \\
\(5\) & \(0.01\) & \(10^{4}\) & \(0.0507\) & \(0.0065\) & \(0.0511\) \\
\bottomrule
\end{tabular}
\end{table}

\begin{table}[H]
\centering
\caption{Hyperparameter sensitivity for nonconvex QP ALL,
\(n=50,\;m_{\mathrm{eq}}=25,\;m_{\mathrm{ineq}}=25\).}
\label{tab:hparam_qp_nonconvex_all_50}
\begin{tabular}{cccccc}
\toprule
\textbf{Rank} & \(\boldsymbol{\mu_B^0}\) & \(\boldsymbol{\mu_P^0}\) & \textbf{Mean \(e_P\)} & \textbf{Mean \(e_D\)} & \textbf{$e_{\mathrm{comb}}$} \\
\midrule
\(1\) & \(10^{-3}\) & \(10^{3}\) & \(0.0062\) & \(0.0119\) & \(0.0134\) \\
\(2\) & \(10^{-3}\) & \(5\times10^{3}\) & \(0.0119\) & \(0.0079\) & \(0.0143\) \\
\(3\) & \(0.1\) & \(5\times10^{3}\) & \(0.0135\) & \(0.0074\) & \(0.0154\) \\
\(4\) & \(0.01\) & \(5\times10^{3}\) & \(0.0154\) & \(0.0073\) & \(0.0171\) \\
\(5\) & \(0.1\) & \(10^{4}\) & \(0.0183\) & \(0.0073\) & \(0.0197\) \\
\bottomrule
\end{tabular}
\end{table}

\begin{table}[H]
\centering
\caption{Hyperparameter sensitivity for convex QP RHS,
\(n=200,\;m_{\mathrm{eq}}=100,\;m_{\mathrm{ineq}}=100\).}
\label{tab:hparam_qp_convex_rhs_200}
\begin{tabular}{cccccc}
\toprule
\textbf{Rank} & \(\boldsymbol{\mu_B^0}\) & \(\boldsymbol{\mu_P^0}\) & \textbf{Mean \(e_P\)} & \textbf{Mean \(e_D\)} & \textbf{$e_{\mathrm{comb}}$} \\
\midrule
\(1\) & \(0.01\) & \(10^{4}\) & \(0.0084\) & \(0.0061\) & \(0.0103\) \\
\(2\) & \(10^{-3}\) & \(10^{4}\) & \(0.0112\) & \(0.0062\) & \(0.0128\) \\
\(3\) & \(0.01\) & \(5\times10^{3}\) & \(0.0101\) & \(0.0101\) & \(0.0142\) \\
\(4\) & \(10^{-3}\) & \(10^{3}\) & \(0.0198\) & \(0.0368\) & \(0.0418\) \\
\(5\) & \(0.1\) & \(10^{4}\) & \(0.0471\) & \(0.0068\) & \(0.0476\) \\
\bottomrule
\end{tabular}
\end{table}

\begin{table}[H]
\centering
\caption{Hyperparameter sensitivity for convex QP ALL,
\(n=200,\;m_{\mathrm{eq}}=100,\;m_{\mathrm{ineq}}=100\).}
\label{tab:hparam_qp_convex_all_200}
\begin{tabular}{cccccc}
\toprule
\textbf{Rank} & \(\boldsymbol{\mu_B^0}\) & \(\boldsymbol{\mu_P^0}\) & \textbf{Mean \(e_P\)} & \textbf{Mean \(e_D\)} & \textbf{$e_{\mathrm{comb}}$} \\
\midrule
\(1\) & \(10^{-3}\) & \(10^{4}\) & \(0.0057\) & \(0.0057\) & \(0.0080\) \\
\(2\) & \(0.01\) & \(10^{4}\) & \(0.0069\) & \(0.0059\) & \(0.0091\) \\
\(3\) & \(10^{-3}\) & \(5\times10^{3}\) & \(0.0042\) & \(0.0096\) & \(0.0105\) \\
\(4\) & \(0.01\) & \(5\times10^{3}\) & \(0.0208\) & \(0.0106\) & \(0.0233\) \\
\(5\) & \(0.01\) & \(10^{3}\) & \(0.0053\) & \(0.0350\) & \(0.0354\) \\
\bottomrule
\end{tabular}
\end{table}

\begin{table}[H]
\centering
\caption{Hyperparameter sensitivity for nonconvex QP RHS,
\(n=200,\;m_{\mathrm{eq}}=100,\;m_{\mathrm{ineq}}=100\).}
\label{tab:hparam_qp_nonconvex_rhs_200}
\begin{tabular}{cccccc}
\toprule
\textbf{Rank} & \(\boldsymbol{\mu_B^0}\) & \(\boldsymbol{\mu_P^0}\) & \textbf{Mean \(e_P\)} & \textbf{Mean \(e_D\)} & \textbf{$e_{\mathrm{comb}}$} \\
\midrule
\(1\) & \(0.01\) & \(5\times10^{3}\) & \(0.0211\) & \(0.0152\) & \(0.0260\) \\
\(2\) & \(0.1\) & \(5\times10^{3}\) & \(0.0203\) & \(0.0166\) & \(0.0262\) \\
\(3\) & \(0.1\) & \(10^{4}\) & \(0.0340\) & \(0.0110\) & \(0.0358\) \\
\(4\) & \(10^{-3}\) & \(5\times10^{3}\) & \(0.0502\) & \(0.0186\) & \(0.0535\) \\
\(5\) & \(10^{-3}\) & \(10^{3}\) & \(0.0205\) & \(0.0512\) & \(0.0551\) \\
\bottomrule
\end{tabular}
\end{table}

\begin{table}[H]
\centering
\caption{Hyperparameter sensitivity for nonconvex QP ALL,
\(n=200,\;m_{\mathrm{eq}}=100,\;m_{\mathrm{ineq}}=100\).}
\label{tab:hparam_qp_nonconvex_all_200}
\begin{tabular}{cccccc}
\toprule
\textbf{Rank} & \(\boldsymbol{\mu_B^0}\) & \(\boldsymbol{\mu_P^0}\) & \textbf{Mean \(e_P\)} & \textbf{Mean \(e_D\)} & \textbf{$e_{\mathrm{comb}}$} \\
\midrule
\(1\) & \(0.01\) & \(5\times10^{3}\) & \(0.0214\) & \(0.0236\) & \(0.0318\) \\
\(2\) & \(10^{-3}\) & \(5\times10^{3}\) & \(0.0303\) & \(0.0220\) & \(0.0375\) \\
\(3\) & \(0.01\) & \(10^{4}\) & \(0.0400\) & \(0.0138\) & \(0.0423\) \\
\(4\) & \(0.1\) & \(10^{4}\) & \(0.0408\) & \(0.0152\) & \(0.0436\) \\
\(5\) & \(10^{-3}\) & \(10^{3}\) & \(0.0152\) & \(0.0508\) & \(0.0530\) \\
\bottomrule
\end{tabular}
\end{table}

\begin{table}[H]
\centering
\caption{Hyperparameter sensitivity for portfolio problems, \(s=50,\;t=5\).}
\label{tab:hparam_portfolio_s50_t5}
\begin{tabular}{cccccccc}
\toprule
\textbf{Rank} & \(\boldsymbol{\mu_B^0}\) & \(\boldsymbol{\mu_P^0}\) & \(\boldsymbol{\tau_M}\) & \textbf{LR} & \textbf{Mean \(e_P\)} & \textbf{Mean \(e_D\)} & \textbf{$e_{\mathrm{comb}}$} \\
\midrule
\(1\) & \(0.01\) & \(0.5\) & \(1\) & \(10^{-4}\) & \(0.0019\) & \(0.0092\) & \(0.0094\) \\
\(2\) & \(0.01\) & \(2\) & \(1\) & \(10^{-4}\) & \(0.0035\) & \(0.0110\) & \(0.0116\) \\
\(3\) & \(0.01\) & \(1\) & \(1\) & \(10^{-4}\) & \(0.0038\) & \(0.0120\) & \(0.0126\) \\
\(4\) & \(0.01\) & \(2\) & \(5\) & \(10^{-4}\) & \(0.0039\) & \(0.0134\) & \(0.0139\) \\
\(5\) & \(0.01\) & \(1\) & \(5\) & \(10^{-4}\) & \(0.0092\) & \(0.0225\) & \(0.0243\) \\
\bottomrule
\end{tabular}
\end{table}

\begin{table}[H]
\centering
\caption{Hyperparameter sensitivity for portfolio problems, \(s=200,\;t=20\).}
\label{tab:hparam_portfolio_s200_t20}
\begin{tabular}{cccccccc}
\toprule
\textbf{Rank} & \(\boldsymbol{\mu_B^0}\) & \(\boldsymbol{\mu_P^0}\) & \(\boldsymbol{\tau_M}\) & \textbf{LR} & \textbf{Mean \(e_P\)} & \textbf{Mean \(e_D\)} & \textbf{Combined} \\
\midrule
\(1\) & \(0.01\) & \(0.1\) & \(1\) & \(10^{-4}\) & \(0.0091\) & \(0.0988\) & \(0.0992\) \\
\(2\) & \(0.01\) & \(0.5\) & \(1\) & \(10^{-4}\) & \(0.0101\) & \(0.1170\) & \(0.1175\) \\
\(3\) & \(10^{-3}\) & \(5\times10^{3}\) & \(1\) & \(10^{-4}\) & \(0.0569\) & \(0.1688\) & \(0.1782\) \\
\(4\) & \(0.01\) & \(2\) & \(1\) & \(10^{-4}\) & \(0.0135\) & \(0.1800\) & \(0.1805\) \\
\(5\) & \(0.01\) & \(1\) & \(1\) & \(10^{-4}\) & \(0.0074\) & \(0.2256\) & \(0.2257\) \\
\bottomrule
\end{tabular}
\end{table}

\begin{table}[H]
\centering
\caption{Hyperparameter sensitivity for SVM problems, \(s=5,\;t=50\).}
\label{tab:hparam_svm_s5_t50}
\begin{tabular}{cccccc}
\toprule
\textbf{Rank} & \(\boldsymbol{\mu_B^0}\) & \(\boldsymbol{\mu_P^0}\) & \textbf{Mean \(e_P\)} & \textbf{Mean \(e_D\)} & \textbf{$e_{\mathrm{comb}}$} \\
\midrule
\(1\) & \(10^{-3}\) & \(0.5\) & \(0.0003\) & \(0.0086\) & \(0.0086\) \\
\(2\) & \(0.1\) & \(0.5\) & \(0.0005\) & \(0.0200\) & \(0.0200\) \\
\(3\) & \(0.01\) & \(2\) & \(0.0005\) & \(0.0312\) & \(0.0312\) \\
\(4\) & \(0.1\) & \(2\) & \(0.0006\) & \(0.0761\) & \(0.0761\) \\
\(5\) & \(10^{-3}\) & \(2\) & \(0.0023\) & \(0.0969\) & \(0.0969\) \\
\bottomrule
\end{tabular}
\end{table}

\begin{table}[H]
\centering
\caption{Hyperparameter sensitivity for SVM problems, \(s=20,\;t=200\).}
\label{tab:hparam_svm_s20_t200}
\begin{tabular}{cccccc}
\toprule
\textbf{Rank} & \(\boldsymbol{\mu_B^0}\) & \(\boldsymbol{\mu_P^0}\) & \textbf{Mean \(e_P\)} & \textbf{Mean \(e_D\)} & \textbf{$e_{\mathrm{comb}}$} \\
\midrule
\(1\) & \(10^{-3}\) & \(0.5\) & \(0.0002\) & \(0.0324\) & \(0.0324\) \\
\(2\) & \(0.01\) & \(0.5\) & \(0.0007\) & \(0.1345\) & \(0.1345\) \\
\(3\) & \(10^{-3}\) & \(5\) & \(0.0002\) & \(0.1726\) & \(0.1726\) \\
\(4\) & \(0.1\) & \(2\) & \(0.0039\) & \(0.1805\) & \(0.1805\) \\
\(5\) & \(10^{-3}\) & \(2\) & \(0.0065\) & \(0.3435\) & \(0.3436\) \\
\bottomrule
\end{tabular}
\end{table}

\begin{table}[H]
\centering
\caption{Hyperparameter sensitivity for single-quadrotor navigation,
$n=200$ and $T=11$.}
\label{tab:hparam_quadrotor_navigation_200}
\begin{tabular}{cccccc}
\toprule
\textbf{Rank}
& $\mu_B^0$
& $\mu_P^0$
& \textbf{Mean $e_P$}
& \textbf{Mean $e_D$}
& \textbf{$e_{\mathrm{comb}}$} \\
\midrule
1 & $10^{-2}$ & $0.1$
  & $4.0483\times10^{-5}$ & $0.0304$ & $0.0304$ \\
2 & $10^{-2}$ & $2$
  & $0.0096$ & $0.0414$ & $0.0426$ \\
3 & $10^{-3}$ & $2$
  & $0.0082$ & $0.0460$ & $0.0467$ \\
4 & $10^{-3}$ & $10$
  & $0.0299$ & $0.0811$ & $0.0865$ \\
5 & $10^{-2}$ & $10$
  & $0.0352$ & $0.1168$ & $0.1220$ \\
\bottomrule
\end{tabular}
\end{table}


\subsection{Further Details on Experiment Problem Formulations}

\subsubsection{Constrained Benchmark Problems}
\label{app:shared_constrained_benchmarks}

We follow the constrained benchmark construction used by
IPM-LSTM~\citep{gao2024ipm_lstm}, which in turn follows the problem
generation of~\citep{donti2021dc3}. The convex benchmark problems have the
form
\begin{equation}
\label{eq:shared_convex_qp}
\begin{aligned}
\min_{x\in\mathbb{R}^n}\quad
& \frac{1}{2}x^\top Q_0x+p_0^\top x \\
\text{s.t.}\quad
& p_j^\top x\leq q_j,
\qquad j=1,\ldots,l,\\
& p_j^\top x=q_j,
\qquad j=l+1,\ldots,m,\\
& x_i^L\leq x_i\leq x_i^U,
\qquad i=1,\ldots,n.
\end{aligned}
\end{equation}
The main shared benchmark uses \(n=200\) variables, \(100\) inequality
constraints, and \(100\) equality constraints. In the RHS setting, only the
right-hand sides of the equality constraints vary across instances, whereas
in the ALL setting, all problem parameters are perturbed.

Following the simple nonconvex benchmark of
IPM-LSTM~\citep{gao2024ipm_lstm}, we additionally replace the linear objective
term \(p_0^\top x\) with a sinusoidal term,
\[
\frac{1}{2}x^\top Q_0x+p_0^\top\sin(x),
\]
while retaining the same constraint structure and instance-generation
procedure. We denote the resulting problem classes by NC-RHS and NC-ALL.

\subsubsection{Box-Constrained QP Instance Generation}
\label{app:box_qp_generation}

For the dimension-scaling experiments, we generate convex box-constrained
quadratic programs of the form
\[
\min_x\;
\frac{1}{2}x^\top Qx+p^\top x
\qquad
\text{s.t.}
\qquad
\ell\leq x\leq u.
\]

For each variable \(x_j\), two values
\(a_j,b_j\sim\mathcal{U}(-1,1)\) are sampled independently, and the bounds
are defined as
\[
\ell_j=\min\{a_j,b_j\},
\qquad
u_j=\max\{a_j,b_j\}.
\]
A minimum interval width of \(10^{-4}\) is enforced by symmetrically
widening any smaller interval.For each coordinate with finite lower and upper bounds, the learned
optimizer is initialized at the midpoint of the box:
\[
x_j^{(0)}=\frac{\ell_j+u_j}{2}.
\]
Thus, the initial point satisfies the box constraints.

\subsubsection{Portfolio Optimization Problem}
\label{app:portfolio_formulation}

We follow the portfolio problem generation of
\citep{chen2024expressive}, with one modification: we use fully dense problem
matrices rather than the sparse construction used in the original
generation. Let \(z\in\mathbb{R}^s\) denote the asset weights and
\(d\in\mathbb{R}^t\) the factor variables. The portfolio problem is
\begin{equation}
\label{eq:portfolio_problem}
\begin{aligned}
\min_{z,d}\quad
& \frac{1}{2}z^\top Dz
+\frac{1}{2}d^\top d
-\mu^\top z \\
\text{s.t.}\quad
& d=Fz,\\
& \mathbf{1}^\top z=1,\\
& z\geq0.
\end{aligned}
\end{equation}
Thus, the problem is a convex quadratic program with linear equality
constraints and variable bounds.

We initialize the asset weights by drawing
\[
r_i\sim\mathcal{U}(0,1),\qquad
\widetilde z_i=r_i+10^{-3},
\]
and normalizing them:
\[
z_i^{(0)}
=
\frac{\widetilde z_i}
{\sum_{j=1}^{s}\widetilde z_j},
\qquad i=1,\ldots,s.
\]
The small offset \(10^{-3}\) prevents a sampled value from being
arbitrarily close to zero before normalization. This is useful for the
interior-point method, whose barrier terms and bound multipliers can
become very large near the bound \(z_i=0\). It is a numerical
initialization choice, not a minimum required asset weight after
normalization.

Since every \(\widetilde z_i>0\), the normalized weights satisfy
\(z_i^{(0)}>0\). Moreover,
\[
\sum_{i=1}^{s}z_i^{(0)}=1.
\]
Thus, the initial weights satisfy both the nonnegativity bounds and
the budget constraint: they allocate exactly 100\% of the available
budget.

We set the initial factor exposures to
\[
d^{(0)}=Fz^{(0)},
\]
which satisfies the other equality constraint, \(d-Fz=0\). Hence,
the initial primal point satisfies all portfolio constraints.

\subsubsection{Support Vector Machine Problem}
\label{app:svm_formulation}

We follow the SVM problem generation of
\citep{chen2024expressive}, again using fully dense problem matrices rather
than the sparse construction used in the original generation. Let
\(z\in\mathbb{R}^s\) denote the classifier variables and
\(\tau\in\mathbb{R}^t\) the slack variables. The SVM problem is
\begin{equation}
\label{eq:svm_problem}
\begin{aligned}
\min_{z,\tau}\quad
& \frac{1}{2}z^\top z
+\lambda\mathbf{1}^\top\tau \\
\text{s.t.}\quad
& \tau-\operatorname{Diag}(y)Dz-\mathbf{1}\geq0,\\
& \tau\geq0.
\end{aligned}
\end{equation}
This is a convex quadratic program with linear inequality constraints.

We initialize the feature vector and slack variables as
\[
z^{(0)}=\mathbf{0},
\qquad
\tau^{(0)}=(1+\delta)\mathbf{1},
\qquad
\delta=10^{-1}.
\]
With \(z^{(0)}=\mathbf{0}\), the SVM constraints require
\(\tau_i\geq 1\). Our choice \(\tau_i^{(0)}=1.1\) therefore gives
\[
\tau_i^{(0)}-y_iD_i z^{(0)}-1=\delta>0
\]
for every sample \(i\), while also satisfying \(\tau_i^{(0)}>0\).
The initial point is thus strictly inside the inequality-feasible
region, rather than lying exactly on its boundary. This positive
margin helps avoid excessively large interior-point terms at
initialization. The remaining inequality slacks and multipliers are
initialized from the constraint residuals and the initial penalty
and barrier parameters.

\subsubsection{Single-Quadrotor Navigation Problem}
\label{app:quadrotor_navigation}

We use a smaller randomized instance of the single-quadrotor navigation
problem from~\citep{viljoen2026scaling}. The objective, nonlinear
quadrotor dynamics, obstacle-avoidance constraint, and variable bounds
retain the same mathematical form. We modify the horizon and the
distribution of the problem parameters to obtain a visually meaningful
navigation problem with \(200\) optimization variables.

\paragraph{State and control variables.}
For \(T\) control intervals, let
\[
\mathbf{x}_k
=
\begin{bmatrix}
\mathbf{p}_k^\top &
\mathbf{q}_k^\top &
\mathbf{v}_k^\top &
\boldsymbol{\omega}_k^\top
\end{bmatrix}^{\top}
\in\mathbb{R}^{13},
\qquad k=0,\ldots,T,
\]
denote the quadrotor state. Its components are the position
\[
\mathbf{p}_k
=
\begin{bmatrix}
p_{x,k} & p_{y,k} & p_{z,k}
\end{bmatrix}^{\top}
\in\mathbb{R}^{3},
\]
the quaternion orientation
\[
\mathbf{q}_k
=
\begin{bmatrix}
q_{w,k} & q_{x,k} & q_{y,k} & q_{z,k}
\end{bmatrix}^{\top}
\in\mathbb{R}^{4},
\]
the linear velocity
\[
\mathbf{v}_k
=
\begin{bmatrix}
v_{x,k} & v_{y,k} & v_{z,k}
\end{bmatrix}^{\top}
\in\mathbb{R}^{3},
\]
and the angular velocity
\[
\boldsymbol{\omega}_k
=
\begin{bmatrix}
\omega_{x,k} & \omega_{y,k} & \omega_{z,k}
\end{bmatrix}^{\top}
\in\mathbb{R}^{3}.
\]
Thus, each state vector contains \(13\) variables.

The control vector is
\[
\mathbf{u}_k
=
\begin{bmatrix}
u_{k,1} & u_{k,2} & u_{k,3} & u_{k,4}
\end{bmatrix}^{\top}
\in\mathbb{R}^{4},
\qquad k=0,\ldots,T-1,
\]
where \(u_{k,j}\) is the angular speed of rotor \(j\) during control
interval \(k\).

There are \(T+1\) state nodes,
\(\mathbf{x}_0,\ldots,\mathbf{x}_T\), but only \(T\) control vectors,
\(\mathbf{u}_0,\ldots,\mathbf{u}_{T-1}\), because each control vector
governs the transition from \(\mathbf{x}_k\) to
\(\mathbf{x}_{k+1}\). The complete decision vector is therefore
\[
\boldsymbol{\xi}
=
\begin{bmatrix}
\mathbf{x}_0^\top &
\cdots &
\mathbf{x}_T^\top &
\mathbf{u}_0^\top &
\cdots &
\mathbf{u}_{T-1}^\top
\end{bmatrix}^{\top}
\in\mathbb{R}^{n},
\]
with
\[
n
=
13(T+1)+4T
=
13+17T.
\]

\paragraph{Equality constraints: quadrotor dynamics.}
The continuous-time dynamics describe how the quadrotor's position,
orientation, linear velocity, and angular velocity change under the
four rotor inputs:
\[
f_{\mathrm{dyn}}(\mathbf{x},\mathbf{u})
=
\dot{\mathbf{x}}
=
\begin{bmatrix}
\dot{\mathbf{p}}
\\[0.2em]
\dot{\mathbf{q}}
\\[0.2em]
\dot{\mathbf{v}}
\\[0.2em]
\dot{\boldsymbol{\omega}}
\end{bmatrix}
=
\begin{bmatrix}
\mathbf{v}
\\[0.4em]
\displaystyle
\frac{1}{2}\,
\mathbf{q}\otimes
\begin{bmatrix}
0\\
\boldsymbol{\omega}
\end{bmatrix}
\\[0.8em]
\displaystyle
\frac{
\mathbf{f}_{\mathrm{drag}}
+
\mathbf{f}_{\mathrm{thrust}}
+
\mathbf{f}_{\mathrm{gravity}}
}{m_B}
\\[1em]
\displaystyle
I_B^{-1}
\left(
\boldsymbol{\tau}_{\mathrm{gyro}}
+
\boldsymbol{\tau}_{\mathrm{rotor}}
+
\boldsymbol{\tau}_{\mathrm{prec}}
\right)
\end{bmatrix}.
\]
Here, \(m_B\) is the quadrotor mass, \(I_B\) is its body-inertia
matrix, and \(\otimes\) denotes quaternion multiplication.
The force and torque terms represent aerodynamic drag, rotor thrust,
gravity, rigid-body gyroscopic torque, rotor-induced torque, and rotor
precession, respectively.

The four blocks of the dynamics have different constraint structures.
The relation
\(\dot{\mathbf{p}}=\mathbf{v}\) is linear, whereas the orientation,
linear-velocity, and angular-velocity dynamics contain products of state
variables, orientation-dependent thrust, squared rotor speeds, and
precession terms and are therefore nonlinear and nonconvex.

We impose the continuous dynamics using explicit Euler discretization:
\[
\mathbf{x}_{k+1}
-
\mathbf{x}_k
-
\Delta t\,
f_{\mathrm{dyn}}(\mathbf{x}_k,\mathbf{u}_k)
=
\mathbf{0},
\qquad k=0,\ldots,T-1.
\]
Each control interval therefore contributes \(13\) equality constraints:
three linear position-update constraints and ten nonlinear, nonconvex
orientation, linear-velocity, and angular-velocity constraints.

\paragraph{Inequality and bound constraints: obstacle avoidance and variable bounds.}
Obstacle avoidance is imposed using the horizontal position
\[
\mathbf{p}_{k,xy}
=
\begin{bmatrix}
p_{x,k} & p_{y,k}
\end{bmatrix}^{\top}.
\]
Because the collision constraint depends only on \(p_x\) and \(p_y\), the
obstacle represents a vertical cylinder. The quadrotor is represented by
its position point, and collision avoidance requires this point to remain
outside a disk centered at the obstacle with effective collision radius
\[
\rho_{\mathrm{eff}}
=
r_{\mathrm{obs}}+r_{\mathrm{quad}},
\]
which accounts for both the obstacle radius and the physical size of the
quadrotor. The required squared clearance is slightly increased along the
trajectory by the factor
\[
1+0.1k\Delta t.
\]

The finite state bounds are
\[
-10
\leq
p_{x,k},p_{y,k},p_{z,k},
v_{x,k},v_{y,k},v_{z,k},
\omega_{x,k},\omega_{y,k},\omega_{z,k}
\leq
10.
\]
The quaternion components have no additional finite box bounds. The
rotor-speed bounds are
\[
75
\leq
u_{k,j}
\leq
925,
\qquad j=1,\ldots,4.
\]

\paragraph{Instance-dependent quantities.}
Each problem instance is parameterized by
\[
\psi=(\theta,R,\alpha,\beta),
\]
where \(\theta\) and \(R\) determine the initial state and
\(\alpha\) and \(\beta\) determine the obstacle location relative to the
initial position. We denote the corresponding initial state by
\(\mathbf{x}_{\mathrm{init}}(\theta,R)\) and the obstacle center by
\(\mathbf{c}_{\mathrm{obs}}(\theta,R,\alpha,\beta)\). 

\paragraph{Optimization problem.}
With the state, dynamics, obstacle geometry, and instance parameters defined
above, the quadrotor navigation problem is
\begin{equation}
\label{eq:quadrotor_navigation_problem}
\begin{aligned}
\underset{\boldsymbol{\xi}}{\operatorname{minimize}}
\quad &
\sum_{k=0}^{T}
\left(
\|\mathbf{p}_k\|_2^2
+
\|\mathbf{v}_k\|_2^2
+
\|\boldsymbol{\omega}_k\|_2^2
\right)
\\
\operatorname{subject\ to}
\quad &
\mathbf{x}_0
=
\mathbf{x}_{\mathrm{init}}(\theta,R),
\\
&
\mathbf{x}_{k+1}
=
\mathbf{x}_k
+
\Delta t\,
f_{\mathrm{dyn}}(\mathbf{x}_k,\mathbf{u}_k),
&& k=0,\ldots,T-1,
\\
&
\left\|
\mathbf{p}_{k,xy}
-
\mathbf{c}_{\mathrm{obs}}(\theta,R,\alpha,\beta)
\right\|_2^2
-
\rho_{\mathrm{eff}}^2
\left(1+0.1k\Delta t\right)
\geq 0,
&& k=0,\ldots,T-1,
\\
&
\underline{\mathbf{x}}
\leq
\mathbf{x}_k
\leq
\overline{\mathbf{x}},
&& k=0,\ldots,T,
\\
&
\underline{\mathbf{u}}
\leq
\mathbf{u}_k
\leq
\overline{\mathbf{u}},
&& k=0,\ldots,T-1.
\end{aligned}
\end{equation}

The objective penalizes the quadrotor's distance from the origin, linear
velocity, and angular velocity throughout the trajectory, encouraging it to
approach the goal at the origin and come to rest. The initial-state equality
fixes the first state of the trajectory, the dynamics equalities require
successive states to obey the discretized quadrotor dynamics, and the
obstacle-avoidance inequalities enforce collision-free horizontal motion.

For \(T=11\), there are \(T+1=12\) state nodes and \(T=11\) control
vectors, giving
\[
n
=
13(12)+4(11)
=
200
\]
optimization variables. The initial-state condition contributes \(13\)
linear equalities, while the dynamics contribute \(3T=33\) linear and
\(10T=110\) nonlinear equalities. Thus, the problem contains \(46\) linear
equality constraints and \(110\) nonlinear equality constraints, for a total
of
\[
m_{\mathrm{eq}}
=
13+13T
=
156
\]
equality constraints. It also contains
\[
m_{\mathrm{ineq}}
=
T
=
11
\]
nonlinear obstacle-avoidance constraints, in addition to the variable box
bounds.

\paragraph{Random instance generation.}
Define the radial and tangential unit vectors
\[
\mathbf{e}_r(\theta)
=
\begin{bmatrix}
\cos\theta\\
\sin\theta
\end{bmatrix},
\qquad
\mathbf{e}_t(\theta)
=
\begin{bmatrix}
-\sin\theta\\
\cos\theta
\end{bmatrix}.
\]
For each dataset instance \(i\), we independently sample
\[
\theta_i
\sim
\mathcal{U}\left(0,\frac{\pi}{2}\right),
\]
\[
R_i
\sim
\mathcal{U}(0.285,0.315)\ \mathrm{m},
\]
\[
\alpha_i
\sim
\mathcal{U}(0.47,0.53),
\qquad
\beta_i
\sim
\mathcal{U}(-0.07,-0.04)\ \mathrm{m}.
\]

The initial state is
\[
\mathbf{x}_{\mathrm{init}}^{(i)}
=
\begin{bmatrix}
R_i\cos\theta_i\\
R_i\sin\theta_i\\
0\\
1\\
0\\
0\\
0\\
-\,v_r\cos\theta_i-v_t\sin\theta_i\\
-\,v_r\sin\theta_i+v_t\cos\theta_i\\
0\\
0\\
0\\
0
\end{bmatrix},
\]
where
\[
v_r=0.30\ \mathrm{m/s},
\qquad
v_t=0.18\ \mathrm{m/s}.
\]
Equivalently, the initial horizontal velocity is
\[
\mathbf{v}_{0,xy}^{(i)}
=
-v_r\mathbf{e}_r(\theta_i)
+
v_t\mathbf{e}_t(\theta_i).
\]

The obstacle center is
\[
\mathbf{c}_{\mathrm{obs}}^{(i)}
=
\alpha_iR_i\mathbf{e}_r(\theta_i)
+
\beta_i\mathbf{e}_t(\theta_i).
\]
The parameter \(\alpha_i\) determines how far the obstacle lies along
the route from the initial position to the goal, while \(\beta_i\)
determines its lateral displacement from the direct route. Although the
obstacle center is expressed in the local radial--tangential frame associated
with the initial position, \(\alpha_i\) and \(\beta_i\) are sampled
independently of \(\theta_i\) and \(R_i\). This construction produces
different initial positions and obstacle locations while keeping the obstacle
relevant to the navigation task.

Across all instances, we fix
\[
T=11,
\qquad
\Delta t=0.35\ \mathrm{s},
\qquad
v_r=0.30\ \mathrm{m/s},
\qquad
v_t=0.18\ \mathrm{m/s},
\]
and
\[
r_{\mathrm{obs}}=0.02\ \mathrm{m},
\qquad
r_{\mathrm{quad}}=0.08\ \mathrm{m},
\qquad
\rho_{\mathrm{eff}}=0.10\ \mathrm{m}.
\]
The randomized quantities are
\[
\theta_i,\qquad R_i,\qquad\alpha_i,\qquad\beta_i,
\]
which determine the initial position, initial velocity direction, and
obstacle center of each problem instance.

\subsection{Additional Experimental Results}
\label{app:additional-experiments}

\subsubsection{Warm-Starting Results}
\label{app:additional-warm-start}

The following tables provide additional solver-level statistics for the
warm-start experiments reported in the main text, including gradient and
function evaluation counts for cold-started pdProj, warm-started pdProj,
and IPOPT. All reported iteration counts, evaluation counts, and runtimes
are averages per problem over the held-out test set. Full convergence is
assessed for all solvers using the solver-neutral criterion
\(r_{\mathrm{KKT}}\leq10^{-8}\) defined in
Appendix~\ref{app:kkt_residual}. In the runtime table, WS Cost denotes the
learned warm-start generation time, and Total Time includes both WS Cost
and the subsequent pdProj solve.

\paragraph{Constrained QPs at \(n=200\).}
\leavevmode\par

\begin{table}[H]
\centering
\caption{Iterations, gradient calls, and function calls for the \(n=200\) constrained benchmarks.}
\label{tab:constrained_qp_eval_counts_appendix_200dim}

\begin{tabular}{llccc}
\toprule
\textbf{Problem Class}
& \textbf{Method}
& \textbf{Iter.}
& \textbf{Grad. Calls}
& \textbf{Func. Calls} \\
\midrule

Convex QP RHS
& IPOPT
& \(15.63\)
& \(17.63\)
& \(16.63\) \\

& Cold pdProj
& \(13.41\)
& \(16.73\)
& \(16.73\) \\

& Warm-started pdProj
& \(4.64\)
& \(5.64\)
& \(5.64\) \\

\midrule

Convex QP ALL
& IPOPT
& \(15.82\)
& \(17.82\)
& \(16.82\) \\

& Cold pdProj
& \(13.22\)
& \(17.65\)
& \(17.65\) \\

& Warm-started pdProj
& \(4.35\)
& \(5.36\)
& \(5.36\) \\

\midrule

Nonconvex RHS
& IPOPT
& \(15.60\)
& \(17.60\)
& \(16.60\) \\

& Cold pdProj
& \(13.47\)
& \(16.88\)
& \(16.88\) \\

& Warm-started pdProj
& \(4.93\)
& \(5.95\)
& \(5.95\) \\

\midrule

Nonconvex ALL
& IPOPT
& \(15.72\)
& \(17.72\)
& \(16.72\) \\

& Cold pdProj
& \(13.46\)
& \(17.84\)
& \(17.84\) \\

& Warm-started pdProj
& \(4.84\)
& \(5.87\)
& \(5.87\) \\

\bottomrule
\end{tabular}
\end{table}

\paragraph{Constrained QPs at \(n=50\).}
\leavevmode\par

\begin{table}[H]
\centering
\caption{Iterations, gradient calls, and function calls for the \(n=50\) constrained benchmarks.}
\label{tab:constrained_qp_iteration_eval_counts_appendix_50dim}

\begin{tabular}{llccc}
\toprule
\textbf{Problem Class}
& \textbf{Method}
& \textbf{Iter.}
& \textbf{Grad. Calls}
& \textbf{Func. Calls} \\
\midrule

Convex QP RHS
& IPOPT
& \(12.08\)
& \(14.08\)
& \(13.08\) \\

& Cold pdProj
& \(8.97\)
& \(10.38\)
& \(10.38\) \\

& Warm-started pdProj
& \(2.92\)
& \(3.93\)
& \(3.93\) \\

& Iteration reduction
& \(\mathbf{67.49\%}\)
& --
& -- \\

\midrule

Convex QP ALL
& IPOPT
& \(12.17\)
& \(14.17\)
& \(13.17\) \\

& Cold pdProj
& \(9.31\)
& \(11.00\)
& \(11.00\) \\

& Warm-started pdProj
& \(2.85\)
& \(3.85\)
& \(3.85\) \\

& Iteration reduction
& \(\mathbf{69.41\%}\)
& --
& -- \\

\midrule

Nonconvex RHS
& IPOPT
& \(12.04\)
& \(14.04\)
& \(13.04\) \\

& Cold pdProj
& \(9.39\)
& \(10.76\)
& \(10.76\) \\

& Warm-started pdProj
& \(2.95\)
& \(3.96\)
& \(3.96\) \\

& Iteration reduction
& \(\mathbf{68.57\%}\)
& --
& -- \\

\midrule

Nonconvex ALL
& IPOPT
& \(12.17\)
& \(14.17\)
& \(13.17\) \\

& Cold pdProj
& \(9.51\)
& \(11.39\)
& \(11.39\) \\

& Warm-started pdProj
& \(3.05\)
& \(4.09\)
& \(4.09\) \\

& Iteration reduction
& \(\mathbf{67.95\%}\)
& --
& -- \\

\bottomrule
\end{tabular}
\end{table}

\begin{table}[H]
\centering
\caption{Runtime results for the \(n=50\) constrained benchmarks.}
\label{tab:constrained_qp_time_reductions_appendix_50dim}

\begin{tabular}{lcccc}
\toprule
\textbf{Problem Class}
& \textbf{Cold Time}
& \textbf{WS Cost}
& \textbf{Total Time}
& \textbf{Time Red.} \\
\midrule

Convex QP RHS
& \(2.69\) s
& \(5.23\) ms
& \(878\) ms
& \(\mathbf{67.31\%}\) \\

Convex QP ALL
& \(2.80\) s
& \(4.59\) ms
& \(861\) ms
& \(\mathbf{69.22\%}\) \\

Nonconvex RHS
& \(2.81\) s
& \(5.31\) ms
& \(886\) ms
& \(\mathbf{68.44\%}\) \\

Nonconvex ALL
& \(2.84\) s
& \(5.64\) ms
& \(917\) ms
& \(\mathbf{67.75\%}\) \\

\bottomrule
\end{tabular}
\end{table}

\paragraph{Convex box-constrained QPs.}
\leavevmode\par

\begin{table}[H]
\centering
\caption{Iterations, gradient calls, and function calls for the box-constrained QPs.}
\label{tab:qpbc_eval_counts}

\begin{tabular}{llccc}
\toprule
\textbf{Problem}
& \textbf{Method}
& \textbf{Iter.}
& \textbf{Grad. Calls}
& \textbf{Func. Calls} \\
\midrule

\(n=200\)
& IPOPT
& \(13.48\)
& \(15.48\)
& \(14.48\) \\

& Cold pdProj
& \(11.44\)
& \(12.44\)
& \(12.44\) \\

& Warm-started pdProj
& \(2.19\)
& \(3.19\)
& \(3.19\) \\

\midrule

\(n=1000\)
& IPOPT
& \(15.50\)
& \(17.50\)
& \(16.50\) \\

& Cold pdProj
& \(13.87\)
& \(14.87\)
& \(14.87\) \\

& Warm-started pdProj
& \(7.23\)
& \(8.23\)
& \(8.23\) \\

\bottomrule
\end{tabular}
\end{table}

\paragraph{Portfolio and SVM problems.}
\leavevmode\par

\begin{table}[H]
\centering
\caption{Iterations, gradient calls, and function calls for the portfolio and SVM benchmarks.}
\label{tab:portfolio_svm_eval_counts_appendix}

\begin{tabular}{lllccc}
\toprule
\textbf{Problem}
& \textbf{$(s,t)$}
& \textbf{Method}
& \textbf{Iter.}
& \textbf{Grad. Calls}
& \textbf{Func. Calls} \\
\midrule

Portfolio
& $(50,5)$
& IPOPT
& $14.68$
& $16.68$
& $15.68$ \\

&
& Cold pdProj
& $8.99$
& $9.99$
& $9.99$ \\

&
& Warm pdProj
& $3.48$
& $4.48$
& $4.48$ \\

\midrule

Portfolio
& $(200,20)$
& IPOPT
& $21.06$
& $23.06$
& $22.06$ \\

&
& Cold pdProj
& $12.07$
& $13.18$
& $13.18$ \\

&
& Warm pdProj
& $9.36$
& $10.57$
& $10.57$ \\

\midrule

SVM
& $(5,50)$
& IPOPT
& $12.52$
& $14.52$
& $13.53$ \\

&
& Cold pdProj
& $9.30$
& $10.33$
& $10.33$ \\

&
& Warm pdProj
& $4.52$
& $5.56$
& $5.56$ \\

\midrule

SVM
& $(20,200)$
& IPOPT
& $15.46$
& $17.46$
& $16.46$ \\

&
& Cold pdProj
& $13.83$
& $15.01$
& $15.01$ \\

&
& Warm pdProj
& $6.31$
& $7.31$
& $7.31$ \\

\bottomrule
\end{tabular}
\end{table}

\paragraph{Quadrotor navigation problem.}
\leavevmode\par

\begin{table}[H]
\centering
\caption{Iterations, gradient calls, and function calls for the nonlinear quadrotor control problem.}
\label{tab:quadrotor_eval_counts_appendix}

\begin{tabular}{llccc}
\toprule
\textbf{Problem}
& \textbf{Method}
& \textbf{Iter.}
& \textbf{Grad. Calls}
& \textbf{Func. Calls} \\
\midrule

Quadrotor
& IPOPT
& \(9.20\)
& \(11.20\)
& \(10.20\) \\

& Cold pdProj
& \(9.74\)
& \(10.74\)
& \(10.74\) \\

& Warm-started pdProj
& \(8.00\)
& \(9.00\)
& \(9.00\) \\

\bottomrule
\end{tabular}
\end{table}

\subsubsection{Direct Approximate-Solution Comparison}
\label{app:direct-approx}

In addition to evaluating the model as a warm-start generator, we evaluate
the learned optimizer directly at a lower-accuracy tolerance. These
experiments test whether the learned iterations can produce useful
approximate solutions without subsequent pdProj refinement. For both the
learned optimizer and IPOPT, convergence is assessed using the same
solver-neutral criterion \(r_{\mathrm{KKT}}\leq10^{-2}\). We report the
fraction of problems satisfying this criterion and the average runtime per
problem. The learned optimizer is run on an NVIDIA RTX PRO 6000 Blackwell
Server Edition GPU using Google Colab, while IPOPT is run on CPU.

\paragraph{Convex and constrained QPs at \(n=200\) and \(n=1000\).}
\leavevmode\par

\begin{table}[H]
\centering
\caption{Direct approximate-solution results on the larger QP benchmarks.}
\label{tab:direct_solver_comparison_qps}

\begin{tabular}{llcc}
\toprule
\textbf{Problem}
& \textbf{Method}
& \textbf{Converged}
& \textbf{Avg. Runtime} \\
\midrule

Box QP, \(n=200\)
& pdLIP
& \(100\%\)
& \(4.02\) ms \\

& IPOPT
& \(100\%\)
& \(85.27\) ms \\

\midrule

Constrained QP RHS, \(n=200\)
& pdLIP
& \(72.4\%\)
& \(20.11\) ms \\

& IPOPT
& \(100\%\)
& \(315.54\) ms \\

\midrule

Constrained QP ALL, \(n=200\)
& pdLIP
& \(85.2\%\)
& \(19.93\) ms \\

& IPOPT
& \(100\%\)
& \(344.84\) ms \\

\midrule

Box QP, \(n=1000\)
& pdLIP
& \(78\%\)
& \(43.61\) ms \\

& IPOPT
& \(100\%\)
& \(1.47\) s \\

\bottomrule
\end{tabular}
\end{table}

Table~\ref{tab:direct_solver_comparison_qps} shows that the learned optimizer
can produce approximate solutions substantially faster than IPOPT on these
convex QP families. On the \(200\)-dimensional problems, the learned
optimizer reaches the solver-neutral KKT tolerance in milliseconds, while IPOPT
requires longer average runtimes. The learned optimizer does not solve every
constrained instance to the prescribed tolerance, but it obtains high
convergence rates in a substantially shorter time.

The larger box-constrained QP experiment provides an additional
dimension-scaling comparison. At \(n=1000\), the learned optimizer reaches
the prescribed tolerance on \(78\%\) of test problems in \(43.61\) ms on
average, compared with \(1.47\) s for IPOPT. This behavior is consistent
with the method design: during learned iterations, the curvature-dependent
Hessian construction and linear-system solve are replaced by the learned
coordinate-wise scaling.

\paragraph{Constrained QPs at \(n=50\).}
\leavevmode\par

\begin{table}[H]
\centering
\caption{Direct approximate-solution results on the \(n=50\) constrained benchmarks.}
\label{tab:qp_direct_solver_comparison}

\begin{tabular}{llcc}
\toprule
\textbf{Problem}
& \textbf{Method}
& \textbf{Converged}
& \textbf{Avg. Runtime} \\
\midrule

Convex RHS
& pdLIP
& \(95.3\%\)
& \(5.23\) ms \\

& IPOPT
& \(100\%\)
& \(4.29\) ms \\

\midrule

Convex ALL
& pdLIP
& \(91.7\%\)
& \(4.59\) ms \\

& IPOPT
& \(100\%\)
& \(4.43\) ms \\

\midrule

Nonconvex RHS
& pdLIP
& \(92.5\%\)
& \(5.31\) ms \\

& IPOPT
& \(100\%\)
& \(4.37\) ms \\

\midrule

Nonconvex ALL
& pdLIP
& \(75.0\%\)
& \(5.64\) ms \\

& IPOPT
& \(100\%\)
& \(4.56\) ms \\

\bottomrule
\end{tabular}
\end{table}

\end{document}